\newtheorem{thm}{Theorem}[section]
\newtheorem{cor}[thm]{Corollary}
\newtheorem{lem}[thm]{Lemma}
\newtheorem{exm}[thm]{Example}
\newtheorem{prop}[thm]{Proposition}
\theoremstyle{definition}
\theoremstyle{remark}
\newtheorem{rem}[thm]{\bf Remark}
\numberwithin{equation}{section}
\begin{document}
\title[The singularity category of a quadratic monomial algebra]
{The singularity category of a quadratic monomial algebra}

\author[Xiao-Wu Chen] {Xiao-Wu Chen}

\thanks{}
\subjclass{16E65, 18G25, 16G10}
\date{\today}

\thanks{E-mail:xwchen$\symbol{64}$mail.ustc.edu.cn}
\keywords{singularity category, quadratic monomial algebra, looped category, stabilization, Leavitt path algebra}%

\maketitle

\dedicatory{}%
\commby{}%

\begin{abstract}
We exploit singular equivalences between artin algebras, that are induced from certain functors between  the stable module categories. Such functors are called pre-triangle equivalences. We construct two pre-triangle equivalences connecting the stable module category over a  quadratic monomial algebra and the one over an algebra with radical square zero. Consequently, we obtain an explicit singular equivalence between the two algebras.
\end{abstract}

\section{Introduction}
Let $A$ be an artin algebra. The singularity category $\mathbf{D}_{\rm sg}(A)$ of $A$ is introduced in \cite{Buc} under the name ``the stable derived category". The terminology is justified by the following fact: the algebra $A$ has finite global dimension if and only if the singularity category $\mathbf{D}_{\rm sg}(A)$ is trivial. Hence, the singularity category provides a homological invariant for algebras of infinite global dimension.

The singularity category captures the stable homological property of an algebra.¡¡ More precisely, certain information of the syzygy endofunctor on the stable $A$-module category is encoded in $\mathbf{D}_{\rm sg}(A)$. Indeed, as  observed in \cite{KV}, the singularity category is equivalent to the stabilization of the pair, that consists of  the stable module category and the syzygy endofunctor on it; see also \cite{Bel2000}. This fact is used in \cite{Chen2011} to describe the singularity category of an algebra with radical square zero. We mention that a similar argument appears in \cite{Shen}.

By the fundamental result in \cite{Buc}, the stable category of Gorenstein-projective $A$-modules might be viewed as a triangulated subcategory of $\mathbf{D}_{\rm sg}(A)$. Moreover, if the algebra $A$ is Gorenstein, the two categories are triangle equivalent. We mention that the study of Gorenstein-projective modules goes back to \cite{ABr} under the name ``modules of G-dimension zero".  The Verdier quotient category $\mathbf{D}_{\rm def}(A)$ of the singularity category $\mathbf{D}_{\rm sg}(A)$  by the stable category of Gorenstein-projective $A$-modules is called the Gorenstein defect category of $A$  in \cite{BJO}. This terminology is justified by the fact that the algebra $A$ is Gorenstein if and only if the category $\mathbf{D}_{\rm def}(A)$ is trivial. In other words, the Gorenstein defect category measures how far the algebra is from being Gorenstein.

By a singular equivalence between two algebras, we mean a triangle equivalence between their singularity categories. We observe that a derived equivalence implies a singular equivalence. However, the converse is not true; for such examples, see \cite{Ch09,PSS}.  In general, a singular equivalence does not induce a triangle equivalence between Gorenstein defect categories. We mention a recent progress in \cite{ZZ}.

The aim of this paper is to study the singularity category of a quadratic monomial algebra. The main ingredient is the following observation: for two algebras, a certain functor between their stable module categories induces a singular equivalence after the stabilization. We call such a functor a pre-triangle equivalence between the stable module categories. More generally, the two stable module categories are called pre-triangle quasi-equivalent provided that there is a zigzag of pre-triangle equivalences connecting them. In this case, we also have a singular equivalence.

 The main result claims a pre-triangle quasi-equivalence between the stable module category of a quadratic monomial algebra and the one of an algebra with radical square zero; see Theorem \ref{thm:main}.  Combining this with the results in \cite{Chen2011,Sm,CYang}, we describe the singularity category of a quadratic monomial algebra via the category of finitely generated graded projective modules over the Leavitt path algebra of a certain quiver; see Proposition \ref{prop:final}. We mention that this description extends the result in \cite{Kal} on the singularity category of a gentle algebra; see also \cite{CGL,CSZ}.

The paper is organized as follows. In Section 2, we recall the stabilization of a looped category. We introduce the notion of a pre-stable equivalence between looped categories, which is a functor between looped categories that induces an equivalence after the stabilization. A pre-stable equivalence in the left triangulated case is called a pre-triangle equivalence, which induces a triangle equivalence after the stabilization. In Section 3, we recall the result in \cite{KV} which claims that the singularity category of an algebra is triangle equivalent to the stabilization of the stable module category. Therefore, a pre-triangle equivalence between stable module categories induces a singular equivalence; see Proposition \ref{prop:key} and compare Proposition \ref{prop:key-inv}. We include explicit examples of pre-triangle equivalences between stable module categories.

 In Section 4, we associate to a quadratic monomial algebra $A$ an algebra $B$ with radical square zero; compare \cite{CSZ}. We construct explicitly two pre-triangle equivalences connecting the stable $A$-module category  to the stable $B$-module category. Then we obtain the required singular equivalence between $A$ and $B$; see Theorem \ref{thm:main}. In Section 5, we combine Theorem \ref{thm:main} with the results in \cite{Chen2011,Sm,CYang} on the singularity category of an algebra with radical square zero. We describe the singularity category and the Gorenstein defect category of a quadratic monomial algebra via the categories of finitely generated graded projective modules over Leavitt path algebras of certain quivers; see Proposition \ref{prop:final}. We discuss some concrete examples at the end.

\section{The stabilization of a looped category}

In this section, we recall the construction of the stabilization of a looped category. The basic references are \cite[Chapter I]{Hel}, \cite[\S 1]{Tier}, \cite{KV} and \cite[Section 3]{Bel2000}.

Following \cite{Bel2000}, a \emph{looped category}$(\mathcal{C}, \Omega)$ consists of a category $\mathcal{C}$ with an endofunctor $\Omega\colon \mathcal{C}\rightarrow \mathcal{C}$, called the \emph{loop functor}. The looped category $(\mathcal{C}, \Omega)$ is said to be \emph{stable} if the loop functor $\Omega$ is an auto-equivalence on $\mathcal{C}$, while it is \emph{strictly stable} if $\Omega$ is an automorphism.

  By a \emph{looped functor} $(F, \delta)$ between two looped categories $(\mathcal{C}, \Omega)$ and $(\mathcal{D}, \Delta)$, we mean a functor $F\colon \mathcal{C}\rightarrow \mathcal{D}$ together with a natural isomorphism $\delta\colon F\Omega\rightarrow \Delta F$. For a looped functor $(F, \delta)$, we define  inductively  for each $i\geq 1$ a natural isomorphism $\delta^i\colon F\Omega^i\rightarrow \Delta^i F$ such that $\delta^1=\delta$ and $\delta^{i+1}=\Delta^i \delta \circ \delta^i\Omega$. Set $\delta^0$ to be the identity transformation on $F$.

We say that a looped functor $(F, \delta)\colon (\mathcal{C}, \Omega)\rightarrow (\mathcal{D}, \Delta)$ is  \emph{strictly looped} provided that $F\Omega=\Delta F$ as functors and $\delta$ is the identity transformation on $F\Omega$. In this case, we write $(F, \delta)$ as $F$; compare \cite[1.1]{Hel}.

Let $(\mathcal{C}, \Omega)$ be a looped category. We define a category $\mathcal{S}=\mathcal{S}(\mathcal{C}, \Omega)$ as follows. The objects of $\mathcal{S}$ are pairs $(X, n)$ with $X$ an object in $\mathcal{C}$ and $n\in \mathbb{Z}$. The Hom-set is defined by the following formula
\begin{align}
{\rm Hom}_\mathcal{S}((X, n), (Y, m))={\rm colim}\; {\rm Hom}_\mathcal{C}(\Omega^{i-n}(X), \Omega^{i-m}(Y)),
\end{align}
where $i$ runs over all integers satisfying $i\geq n$ and $i\geq m$. An element $f$ in ${\rm Hom}_\mathcal{S}((X, n), (Y, m))$ is said to have an \emph{$i$-th representative} $f_i\colon \Omega^{i-n}(X)\rightarrow \Omega^{i-m}(Y)$ provided that the canonical image of $f_i$ equals $f$.  The composition of morphisms in $\mathcal{S}$ is induced by the one in $\mathcal{C}$. We observe that $\tilde{\Omega}\colon \mathcal{S}\rightarrow \mathcal{S}$ sending $(X,n)$ to $(X, n-1)$ is an automorphism. Then we have a strictly stable category $(\mathcal{S}, \tilde{\Omega})$.

There is a canonical functor $\mathbf{S}\colon \mathcal{C}\rightarrow \mathcal{S}$ sending $X$ to $(X, 0)$, and a morphism $f$ to $\mathbf{S}(f)$ whose $0$-th representative is $f$. For an object $X$ in $\mathcal{C}$, we have a natural isomorphism $$\theta_X\colon (\Omega X, 0)\longrightarrow (X, -1),$$
whose $0$-th representative is ${\rm Id}_{\Omega X}$. Indeed, this yields a looped functor $$(\mathbf{S}, \theta)\colon (\mathcal{C}, \Omega)\longrightarrow (\mathcal{S}, \tilde{\Omega}).$$
 This process is called in \cite{Hel} the \emph{stabilization} of the looped functor $(\mathcal{C}, \Omega)$. We mention that $\mathbf{S}\colon \mathcal{C}\rightarrow \mathcal{S}$ is an equivalence if and only if $(\mathcal{C}, \Omega)$ is a stable category, in which case we identify $(\mathcal{C}, \Omega)$ with $(\mathcal{S}, \tilde{\Omega})$.

The stabilization functor $(\mathbf{S}, \theta)$ enjoys a universal property; see \cite[Proposition 1.1]{Hel}. Let $(F, \delta)\colon (\mathcal{C}, \Omega)\rightarrow (\mathcal{D}, \Delta)$ be a looped functor with $(\mathcal{D}, \Delta)$ a strictly stable category. We denote by $\Delta^{-1}$ the inverse of $\Delta$.  Then there is a unique functor $\tilde{F}\colon (\mathcal{S}, \tilde{\Omega})\rightarrow (\mathcal{D}, \Delta)$ which is strictly looped satisfying $F=\tilde{F}\mathbf{S}$ and $\delta=\tilde{F}\theta$. The functor $\tilde{F}$ sends $(X, n)$ to $\Delta^{-n}F(X)$. For a morphism $f\colon (X, n)\rightarrow (Y, m)$ whose $i$-th representative is given by $f_i\colon \Omega^{i-n}(X)\rightarrow \Omega^{i-m}(Y)$, we have
\begin{align}
\tilde{F}(f)=\Delta^{-i}((\delta^{i-m}_Y) \circ F(f_i)\circ (\delta^{i-n}_X)^{-1})\colon \Delta^{-n} F(X)\longrightarrow \Delta^{-m} F(Y).
\end{align}

\begin{lem}\label{lem:1}
Keep the notation as above. Then the functor $\tilde{F}\colon (\mathcal{S}, \tilde{\Omega})\rightarrow (\mathcal{D}, \Delta)$ is an equivalence if and only if the following conditions are satisfied:
\begin{enumerate}
\item  for any  morphism $g\colon FX\rightarrow FY$ in $\mathcal{D}$, there exist $i\geq 0$ and a morphism $f\colon \Omega^i X\rightarrow \Omega^i Y$ in $\mathcal{C}$ satisfying $\Delta^i(g)=\delta^i_Y\circ F(f)\circ (\delta^i_X)^{-1}$;
\item for any two morphisms $f, f'\colon X\rightarrow Y$ in $\mathcal{C}$ with $F(f)=F(f')$, there exists $i\geq 0$  such that $\Omega^i(f)=\Omega^i(f')$;
\item for any object $D$ in $\mathcal{D}$, there exist $i\geq 0$ and an object $X$ in $\mathcal{C}$ satisfying $\Delta^i(D)\simeq F(X)$.
\end{enumerate}
\end{lem}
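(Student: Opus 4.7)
The plan is to match each of the three conditions with one of the defining properties of an equivalence applied to $\tilde{F}$: condition (3) corresponds to essential surjectivity, (1) to fullness, and (2) to faithfulness. The main technical tool, used only for the \emph{if} direction, is the cocycle identity $\delta^{a+b}_Z = \Delta^b(\delta^a_Z) \circ \delta^b_{\Omega^a Z}$, which follows by an easy induction from the recursion $\delta^{i+1} = \Delta^i \delta \circ \delta^i \Omega$.

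For the \emph{only if} direction, all three conditions drop out quickly. For (3), essential surjectivity of $\tilde{F}$ gives $D \simeq \tilde{F}(X, n) = \Delta^{-n}F(X)$; using the canonical isomorphism $(X, n) \simeq (\Omega^k X, n+k)$ in $\mathcal{S}$ (whose representative at each level is the identity) one may replace $n$ by any larger integer, in particular arrange $n \geq 0$, so $\Delta^n D \simeq F(X)$ as required. For (1), view $g \colon FX \to FY$ as a morphism $\tilde{F}(X,0) \to \tilde{F}(Y,0)$; by fullness $g = \tilde{F}(f)$ for some $f \in \mathrm{Hom}_{\mathcal{S}}((X,0), (Y,0))$, and taking any $i$-th representative $f_i$ of $f$ and unwinding the formula defining $\tilde{F}$ yields (1) directly. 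For (2), $F(f) = F(f')$ implies $\tilde{F}(\mathbf{S}(f)) = \tilde{F}(\mathbf{S}(f'))$, so $\mathbf{S}(f) = \mathbf{S}(f')$ by faithfulness; equality in the colimit Hom-set of their common $0$-th representatives is precisely $\Omega^i(f) = \Omega^i(f')$ for some $i \geq 0$.

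For the \emph{if} direction, essential surjectivity is immediate from (3) since $\Delta^i D \simeq F(X)$ gives $D \simeq \Delta^{-i}F(X) = \tilde{F}(X, i)$. For faithfulness, suppose $\tilde{F}(f) = \tilde{F}(f')$ for $f, f' \colon (X, n) \to (Y, m)$, pick a common level $k$ with representatives $f_k, f'_k$, and use that $\Delta$ is invertible and $\delta^{k-n}_X, \delta^{k-m}_Y$ are isomorphisms to strip them off and conclude $F(f_k) = F(f'_k)$; then (2) gives $\Omega^i(f_k) = \Omega^i(f'_k)$ for some $i$, i.e.\ the $(k+i)$-th representatives of $f, f'$ coincide, so $f = f'$ in $\mathcal{S}$. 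For fullness, given $g \colon \Delta^{-n}F(X) \to \Delta^{-m}F(Y)$ and $k \geq \max(n, m)$, form the corrected morphism $\tilde g_k = (\delta^{k-m}_Y)^{-1} \circ \Delta^k(g) \circ \delta^{k-n}_X \colon F\Omega^{k-n}X \to F\Omega^{k-m}Y$; apply (1) to $\tilde g_k$ to obtain $i \geq 0$ and $h \colon \Omega^{k+i-n}X \to \Omega^{k+i-m}Y$ with $\Delta^i(\tilde g_k) = \delta^i_{\Omega^{k-m}Y} \circ F(h) \circ (\delta^i_{\Omega^{k-n}X})^{-1}$, and let $f \colon (X,n) \to (Y,m)$ be the morphism in $\mathcal{S}$ with $(k+i)$-th representative $h$.

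The only real obstacle is the bookkeeping in the verification $\tilde{F}(f) = g$ for the fullness step. Substituting the $(k+i)$-th representative $h$ into the defining formula for $\tilde{F}$ and applying the cocycle identity to split $\delta^{k+i-m}_Y = \Delta^i(\delta^{k-m}_Y) \circ \delta^i_{\Omega^{k-m}Y}$ (and likewise for $X$) lets the inner composite $\delta^i_{\Omega^{k-m}Y} \circ F(h) \circ (\delta^i_{\Omega^{k-n}X})^{-1} = \Delta^i(\tilde g_k)$ be absorbed; the outer $\Delta^i(\delta^{k-m}_Y), \Delta^i(\delta^{k-n}_X)^{-1}$ then cancel against the definition of $\tilde g_k$, and $\Delta^{-k}\Delta^k(g) = g$ finishes the computation. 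Aside from this telescoping, every step is a routine manipulation with colimits of Hom-sets.
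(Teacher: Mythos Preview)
Your proof is correct and follows exactly the same strategy as the paper: identifying conditions (1), (2), (3) with fullness, faithfulness, and density of $\tilde{F}$, respectively. The paper's proof consists of just this identification together with a reference to \cite[1.2 Proposition]{Tier} and \cite[Proposition 3.4]{Bel2000} for the details, whereas you have supplied those details in full (including the cocycle identity and the telescoping computation), so there is nothing to add or correct.
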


\begin{proof}
Indeed, the above three conditions are equivalent to the statements that $\tilde{F}$ is full, faithful and dense, respectively. We refer to \cite[1.2 Proposition]{Tier} for the details and compare \cite[Proposition 3.4]{Bel2000}.
\end{proof}

We now apply Lemma \ref{lem:1} to a specific situation. Let $(F, \delta)\colon (\mathcal{C}, \Omega) \rightarrow (\mathcal{C}', \Omega')$ be a looped functor. Consider the composition
\begin{align}\label{equ:1}
(\mathcal{C}, \Omega) \stackrel{(F, \delta)}\longrightarrow (\mathcal{C}', \Omega') \stackrel{(\mathbf{S}, \theta)}\longrightarrow (\mathcal{S}(\mathcal{C}', \Omega'), \tilde{\Omega}').
\end{align}
By the universal property of the stabilization, there is a unique strictly looped functor $\mathcal{S}(F, \delta)\colon (\mathcal{S}(\mathcal{C}, \Omega), \tilde{\Omega}) \rightarrow (\mathcal{S}(\mathcal{C}', \Omega'), \tilde{\Omega}')$ making the following diagram commutative.
\[\xymatrix{
(\mathcal{C}, \Omega) \ar[rr]^{(F, \delta)} \ar[d]_{(\mathbf{S}, \theta)} && (\mathcal{C}, \Omega') \ar[d]^-{(\mathbf{S}, \theta)}\\
(\mathcal{S}(\mathcal{C}, \Omega), \tilde{\Omega}) \ar[rr]^-{ \mathcal{S}(F, \delta)} && (\mathcal{S}(\mathcal{C}', \Omega'), \tilde{\Omega}')
}\]
We call the functor $\mathcal{S}(F, \delta)$ the \emph{stabilization} of $(F, \delta)$.

\begin{prop}\label{prop:stab}
Let $(F, \delta)\colon (\mathcal{C}, \Omega) \rightarrow (\mathcal{C}', \Omega')$ be a looped functor. Then its stabilization $\mathcal{S}(F, \delta)$ is an equivalence if and only if the following conditions are satisfied:
\begin{enumerate}
\item[(S1)] for any  morphism $g\colon FX\rightarrow FY$ in $\mathcal{C}'$, there exist $i\geq 0$ and a morphism $f\colon \Omega^i X\rightarrow \Omega^i Y$ in $\mathcal{C}$ satisfying ${\Omega'}^i(g)=\delta^i_Y\circ F(f)\circ (\delta^i_X)^{-1}$;
\item[(S2)]for any two morphisms $f, f'\colon X\rightarrow Y$ in $\mathcal{C}$ with $F(f)=F(f')$, there exists $i\geq 0$  such that $\Omega^i(f)=\Omega^i(f')$;
\item[(S3)] for any object $C'$ in $\mathcal{C}'$, there exist $i\geq 0$ and an object $X$ in $\mathcal{C}$ satisfying ${\Omega'}^i(C')\simeq F(X)$.
\end{enumerate}
\end{prop}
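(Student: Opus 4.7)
The plan is to deduce Proposition \ref{prop:stab} directly from Lemma \ref{lem:1}. First I would promote $G := \mathbf{S}\circ F \colon \mathcal{C}\to \mathcal{S}(\mathcal{C}', \Omega')$ to a looped functor $(G, \epsilon)$ by setting $\epsilon_X = \theta_{FX}\circ \mathbf{S}(\delta_X)$. Since the target $(\mathcal{S}(\mathcal{C}',\Omega'), \tilde{\Omega}')$ is strictly stable, the universal property of stabilization identifies $\mathcal{S}(F,\delta)$ with the unique strictly looped extension $\tilde{G}$ provided by Lemma \ref{lem:1}. The proposition therefore reduces to showing that the three conditions of Lemma \ref{lem:1} applied to $(G,\epsilon)$ are equivalent to (S1), (S2) and (S3). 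Each case amounts to a translation, via the Hom-colimit formula, between morphisms and objects of $\mathcal{S}(\mathcal{C}', \Omega')$ on the one hand and $\mathcal{C}'$ on the other.

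For condition (3), every object of $\mathcal{S}(\mathcal{C}', \Omega')$ has the form $(C', n)$, and iterating $\theta$ gives a canonical isomorphism $\tilde{\Omega}'^{i}(C', n) \simeq \mathbf{S}(\Omega'^{i-n} C')$ for $i\geq n$. An isomorphism $\mathbf{S}(\Omega'^{k} C') \simeq \mathbf{S}(F(X))$ in $\mathcal{S}$ can be represented by a morphism and its inverse; after applying $\Omega'^m$ for $m$ sufficiently large these become honest mutually inverse morphisms in $\mathcal{C}'$, and combining with the isomorphism $\delta^m_X \colon F(\Omega^m X)\to \Omega'^m F(X)$ yields an isomorphism $\Omega'^{\ell}(C') \simeq F(X')$ in $\mathcal{C}'$ --- precisely (S3). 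Condition (2) is similar and easier: $G(f) = G(f')$ unwinds to $\Omega'^{j}F(f) = \Omega'^{j} F(f')$ for some $j\geq 0$, equivalently $F\Omega^j(f) = F \Omega^j(f')$ by naturality of $\delta^j$; then (S2) applied to $\Omega^j f$ and $\Omega^j f'$ supplies the required equaliser index, while the converse follows by specialising to $j=0$.

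The delicate translation is (1) $\Leftrightarrow$ (S1), and this is the step I expect to absorb most of the bookkeeping. A morphism $g\colon G(X)\to G(Y)$ in $\mathcal{S}$ is generally not of the form $\mathbf{S}(h)$; it only has a $j$-th representative $g_j\colon \Omega'^j F(X)\to \Omega'^j F(Y)$ for some $j\geq 0$. My plan is to set $h = (\delta^j_Y)^{-1}\circ g_j \circ \delta^j_X\colon F\Omega^j X \to F\Omega^j Y$, apply (S1) to $h$ to produce $i\geq 0$ and $f\colon \Omega^{i+j}X \to \Omega^{i+j}Y$ with ${\Omega'}^i(h) = \delta^i_{\Omega^j Y}\circ F(f)\circ (\delta^i_{\Omega^j X})^{-1}$, and then check that this $f$ verifies condition (1) with index $i+j$. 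The verification requires unfolding $\epsilon^{i+j}_X$ into a composite of $\theta$'s and $\mathbf{S}(\delta^{i+j}_X)$ and invoking the cocycle identity $\delta^{p+q} = \Omega'^{p}\delta^q \circ \delta^p\Omega^q$ built into the definition of $\delta^i$. The converse direction is easier: specialising condition (1) to $g = \mathbf{S}(h)$ for $h\colon FX\to FY$ and chasing the definitions of $\mathbf{S}$, $\theta$ and $\epsilon^i$ recovers (S1) at once. The main obstacle is thus not conceptual but notational --- keeping the indices and the iterated natural isomorphisms $\delta^i$, $\theta$, $\epsilon^i$ correctly aligned throughout the reduction.
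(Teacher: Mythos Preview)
Your proposal is correct and follows essentially the same route as the paper: you form the composite looped functor $(G,\epsilon)=(\mathbf{S}F,\theta_F\circ\mathbf{S}\delta)$ (the paper writes this as $(\mathbf{S}F,\partial)$), invoke Lemma~\ref{lem:1}, and translate each of its three conditions into (S1)--(S3) using exactly the same $j$-th representative trick and the same index shift $i\mapsto i+j$. The only cosmetic difference is that for condition~(3) you sketch the direction (3)$\Rightarrow$(S3) while the paper writes out (S3)$\Rightarrow$(3), but both sides declare the converse routine.
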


The looped functor $(F, \delta)$ is called a \emph{pre-stable equivalence} if it satisfies (S1)-(S3). The result implies that a pre-stable equivalence induces an equivalence between the stabilized categories.

\begin{proof}
Write $(\mathcal{D}, \Delta)=(\mathcal{S}(\mathcal{C}', \Omega'), \tilde{\Omega}')$ and $\tilde{F}=\mathcal{S}(F, \delta)$. Write the composition (\ref{equ:1}) as $(\mathbf{S}F, \partial)$. Then for an object $X$ in $\mathcal{C}$, the morphism $\partial_X\colon \mathbf{S}F\Omega(X)\rightarrow \tilde{\Omega}'\mathbf{S}F(X)$ equals $\theta_{FX}\circ \mathbf{S}(\delta_X)$. We make the following observation: for a morphism $f\colon \Omega^l (X)\rightarrow \Omega^l (Y)$ in $\mathcal{C}$, the morphism $\partial_Y^l\circ \mathbf{S}F(f)\circ (\partial_X^{l})^{-1}$ has a $0$-th representative $\delta^{l}_Y\circ F(f)\circ (\delta^{l}_X)^{-1}$.

We claim that for each $1\leq i\leq 3$, the condition (S$i$) is equivalent to the condition ($i$) in Lemma \ref{lem:1}. Then we are done by Lemma \ref{lem:1}.

For the claim, we only prove that (S$i$) implies ($i$). By reversing the argument, we obtain the converse implication.

 For (1), we take a morphism $g\colon \mathbf{S}F(X)=(FX, 0)\rightarrow \mathbf{S}F(Y)=(FY, 0)$ in $\mathcal{D}$. We assume that $g$ has a $j$-th representative $g_j\colon {\Omega'}^jF(X)\rightarrow {\Omega'}^jF(Y)$. Consider the morphism $h\colon F(\Omega^j X)\rightarrow F(\Omega^j Y)$ by $h=(\delta^j_Y)^{-1}\circ g_j\circ \delta^j_X$. Then by (S$1$) there exist $i\geq 0$ and a morphism $f\colon \Omega^{i+j}(X)\rightarrow \Omega^{i+j}(Y)$ satisfying ${\Omega'}^i(h)=(\delta^i_{\Omega^jY})\circ F(f)\circ (\delta^i_{\Omega^jX})^{-1}$. Then we have $\Delta^{i+j}(g)=\partial^{i+j}_Y\circ \mathbf{S}F(f)\circ (\partial^{i+j}_X)^{-1}$. Here, we use the observation above and the fact that $\Delta^{i+j}(g)$ has a $0$-th representative $\Omega'^{i}(g_j)$.  We are done with (1).

For (2), we take two morphisms $f, f'\colon X\rightarrow Y$ in $\mathcal{C}$ with $\mathbf{S}F(f)=\mathbf{S}F(f')$. Then there exists $j\geq 0$ such that ${\Omega'}^jF(f)={\Omega'}^jF(f')$. Using the natural isomorphism $\delta^j$, we infer that $F\Omega^j(f)=F\Omega^j(f')$. By (S2) there exists $i\geq 0$ such that $\Omega^{i+j}(f)=\Omega^{i+j}(f')$, proving (2).

For (3), we take any object $(C', n)$ in $(\mathcal{D}, \Delta)$. We may assume that $n\geq 0$. Otherwise, we use the isomorphism $\theta_{C'}^{-n}\colon ((\Omega')^{-n}(C'), 0)\simeq (C', n)$. By (S3) there exist $j\geq 0$ and an object $X$ in $\mathcal{C}$ satisfying ${\Omega'}^j(C')\simeq F(X)$. We observe that $\Delta^{j+n}(C', n)=(C', -j)$, which is isomorphic to $\mathbf{S}{\Omega'}^j(C')$, which is further isomorphic to $\mathbf{S}F(X)$. Set $i=j+n$. Then we have the required isomorphism $\Delta^i(C', n)\simeq \mathbf{S}F(X)$ for (3).
\end{proof}

We make an easy observation.

\begin{cor}\label{cor:S3}
Let $(F, \delta)\colon (\mathcal{C}, \Omega) \rightarrow (\mathcal{C}', \Omega')$ be a looped functor. Assume that $F$ is fully faithful. Then $(F, \delta)$ is a pre-stable equivalence if and only if (S$3$) holds.
\end{cor}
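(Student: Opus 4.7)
The plan is to show that when $F$ is fully faithful, conditions (S1) and (S2) of Proposition \ref{prop:stab} are automatic, so that only (S3) carries content. The forward direction is immediate from the definition of pre-stable equivalence, so the work is entirely in the reverse direction.

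First I would verify (S2). Suppose $f,f'\colon X\to Y$ satisfy $F(f)=F(f')$. Since $F$ is faithful, $f=f'$, and therefore $\Omega^0(f)=\Omega^0(f')$; so (S2) holds with $i=0$.

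Next I would verify (S1). Given $g\colon FX\to FY$, fullness of $F$ yields $f\colon X\to Y$ with $F(f)=g$. Recall from the definition preceding the statement of Proposition \ref{prop:stab} that $\delta^0$ is taken to be the identity transformation on $F$, so $\delta^0_X$ and $\delta^0_Y$ are identities. Taking $i=0$, the required equation
\[
{\Omega'}^0(g)=\delta^0_Y\circ F(f)\circ (\delta^0_X)^{-1}
\]
reduces to $g=F(f)$, which holds by choice of $f$. Hence (S1) holds.

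Combining these observations with the hypothesis (S3) shows $(F,\delta)$ satisfies (S1)-(S3), so it is a pre-stable equivalence by Proposition \ref{prop:stab}. The converse is trivial since any pre-stable equivalence satisfies (S3) by definition. There is no real obstacle here; the only point to be careful about is using the convention $\delta^0=\mathrm{id}_F$ so that (S1) can be witnessed at the zeroth stage.
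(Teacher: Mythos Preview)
Your proof is correct and follows exactly the same approach as the paper: the paper simply remarks that by the fully-faithfulness of $F$, conditions (S1) and (S2) hold trivially by taking $i=0$ in both. Your version just unpacks this one-line observation in more detail.
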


\begin{proof}
By the fully-faithfulness of $F$, the conditions (S$1$) and (S$2$) hold trivially. We just take $i=0$ in both the conditions.
\end{proof}

We say that two looped categories $(\mathcal{C}, \Omega)$ and $(\mathcal{C}', \Omega')$ are \emph{pre-stably quasi-equivalent} provided that there exists a chain of looped categories \begin{align}\label{equ:2}
(\mathcal{C}, \Omega)=(\mathcal{C}_1, \Omega_1), (\mathcal{C}_2, \Omega_2), \cdots, (\mathcal{C}_n, \Omega_n)=(\mathcal{C}', \Omega')
\end{align}
such that for each $1\leq i\leq n-1$, there exists a pre-stable equivalence from $(\mathcal{C}_i, \Omega_i)$ to $(\mathcal{C}_{i+1}, \Omega_{i+1})$, or a pre-stable equivalence from $(\mathcal{C}_{i+1}, \Omega_{i+1})$ to $(\mathcal{C}_i, \Omega_i)$.

We have the following immediate consequence of Proposition \ref{prop:stab}.

\begin{cor}
Let $(\mathcal{C}, \Omega)$ and $(\mathcal{C}', \Omega')$ be two looped categories which are pre-stably quasi-equivalent. Then there is a looped functor $$(\mathcal{S}(\mathcal{C}, \Omega), \tilde{\Omega})\stackrel{\sim}\longrightarrow (\mathcal{S}(\mathcal{C}', \Omega'), \tilde{\Omega}'),$$
which is an equivalence.  \hfill $\square$
\end{cor}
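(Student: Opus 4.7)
The plan is to reduce the statement to a direct iteration of Proposition~\ref{prop:stab}, using the fact that equivalences compose and admit quasi-inverses. So I would unwind the definition of pre-stably quasi-equivalent in (\ref{equ:2}) and proceed inductively on the length $n$ of the zigzag.

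For each link $1\leq i\leq n-1$, the hypothesis gives a pre-stable equivalence in one of the two directions between $(\mathcal{C}_i, \Omega_i)$ and $(\mathcal{C}_{i+1}, \Omega_{i+1})$. By Proposition~\ref{prop:stab}, its stabilization yields a strictly looped functor between the corresponding stabilized categories which is an equivalence of ordinary categories. If the arrow points from $(\mathcal{C}_{i+1}, \Omega_{i+1})$ to $(\mathcal{C}_i, \Omega_i)$, I would choose a quasi-inverse of that equivalence; this quasi-inverse is again an equivalence, and one verifies that since the original is strictly looped (i.e., commutes strictly with the loop automorphisms $\tilde{\Omega}_i$ and $\tilde{\Omega}_{i+1}$), the quasi-inverse is naturally equipped with the structure of a looped functor between the stable categories (the coherence isomorphism is forced by the unit and counit).

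Composing these looped equivalences from $(\mathcal{S}(\mathcal{C}_1, \Omega_1), \tilde{\Omega}_1)$ all the way to $(\mathcal{S}(\mathcal{C}_n, \Omega_n), \tilde{\Omega}_n)$ produces a single looped functor
\[
(\mathcal{S}(\mathcal{C}, \Omega), \tilde{\Omega})\longrightarrow (\mathcal{S}(\mathcal{C}', \Omega'), \tilde{\Omega}')
\]
which is an equivalence of underlying categories, as required.

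The only mild subtlety is the bookkeeping for the loop-functor structure on the quasi-inverse of a strictly looped equivalence, since one generally loses strictness when inverting. But this is a standard fact: whenever $G$ is an equivalence intertwining two auto-equivalences strictly, any chosen quasi-inverse $G^{-1}$ inherits a canonical natural isomorphism from the unit/counit, making it a looped functor, and compositions of looped functors are looped functors. I do not expect any genuine obstacle; the content of the corollary is exactly that Proposition~\ref{prop:stab} is closed under the groupoid-completion operation implicit in the definition of ``pre-stably quasi-equivalent''.
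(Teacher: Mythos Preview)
Your proposal is correct and matches the paper's approach exactly: the paper states this corollary as an ``immediate consequence of Proposition~\ref{prop:stab}'' and gives no further argument (it ends with $\square$). Your explicit unwinding of the zigzag, application of Proposition~\ref{prop:stab} at each step, and the routine observation about quasi-inverses of strictly looped equivalences is precisely the content the paper leaves implicit.
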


Let $(F, \delta)\colon (\mathcal{C}, \Omega) \rightarrow (\mathcal{C}', \Omega')$ be a looped functor. A full subcategory $\mathcal{X}\subseteq \mathcal{C}$ is said to be \emph{saturated} provided that the following conditions are satisfied.
\begin{enumerate}
\item[(Sa1)] For each object $X$ in $\mathcal{C}$, there is a morphism $\eta_X\colon X\rightarrow G(X)$ with $G(X)$ in $\mathcal{X}$ such that $F(\eta_X)$ is an isomorphism and that $\Omega^d(\eta_X)$ is an isomorphism for some $d\geq 0$.
\item[(Sa2)] For a morphism $f\colon X\rightarrow Y$, there is a morphism $G(f)\colon G(X)\rightarrow G(Y)$ with $G(f)\circ \eta_X=\eta_Y\circ f$.
\item[(Sa3)] The conditions (S1)-(S3) above hold by requiring that all the objects $X, Y$ belong to $\mathcal{X}$.
\end{enumerate}

\begin{exm}
{\rm Let $(F, \delta)\colon (\mathcal{C}, \Omega) \rightarrow (\mathcal{C}', \Omega')$ be a looped functor. Assume that $F$ has a right adjoint functor $G$, which is fully faithful. Assume further that the unit $\eta\colon {\rm Id}_\mathcal{C}\rightarrow GF$ satisfies the following condition: for each object $X$, there exists $d\geq 0$ with $\Omega^d(\eta_X)$ an isomorphism. Take $\mathcal{X}$ to be the essential image of $G$.

We claim that $\mathcal{X}\subseteq \mathcal{C}$ is a saturated subcategory. Indeed, the restriction $F|_\mathcal{X}\colon \mathcal{X}\rightarrow \mathcal{C}'$ is an equivalence. Then (Sa3) holds trivially, by taking $i$ to be zero in (S1)-(S3). The conditions (Sa1) and (Sa2) are immediate from the assumption. Here, we use the well-known fact that $F(\eta)$ is a natural isomorphism, since $G$ is fully faithful.}
\end{exm}

\begin{lem}\label{lem:stab}
Let $(F, \delta)\colon (\mathcal{C}, \Omega) \rightarrow (\mathcal{C}', \Omega')$ be a looped functor, and $\mathcal{X}\subseteq \mathcal{C}$ a saturated subcategory. Then the conditions (S1)-(S3) hold, that is, the functor $(F, \delta)$ is a pre-stable equivalence.
\end{lem}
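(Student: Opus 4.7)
The plan is to verify each of (S1)--(S3) for arbitrary objects of $\mathcal{C}$ by transporting the problem into the saturated subcategory $\mathcal{X}$, where (Sa3) already supplies the statements. The essential tool is the replacement morphism $\eta_X\colon X\to G(X)$ from (Sa1) with $G(X)\in\mathcal{X}$. Its two key properties are that $F(\eta_X)$ is invertible in $\mathcal{C}'$ and that $\Omega^{d}(\eta_X)$ is invertible in $\mathcal{C}$ for some $d\geq 0$. For any finite list of objects I care about, I may enlarge the exponents so that a single $d$ works, and then $\Omega^{d+k}(\eta_X)$ is invertible for every $k\geq 0$.

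Condition (S3) needs no argument, since $\mathcal{X}\subseteq\mathcal{C}$ and so any witness produced by (Sa3) already lies in $\mathcal{C}$.

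For (S2), given $f,f'\colon X\to Y$ with $F(f)=F(f')$, I would form $G(f), G(f')\colon G(X)\to G(Y)$ using (Sa2). The identities $G(f)\circ \eta_X=\eta_Y\circ f$ and $G(f')\circ \eta_X=\eta_Y\circ f'$, combined with $F(f)=F(f')$ and the invertibility of $F(\eta_X)$, force $F(G(f))=F(G(f'))$. Applying (Sa3) to $G(X), G(Y)\in\mathcal{X}$ yields $j\geq 0$ with $\Omega^{j}(G(f))=\Omega^{j}(G(f'))$. Applying $\Omega^{j+d}$ to the two naturality squares and cancelling the isomorphism $\Omega^{j+d}(\eta_Y)$ on the left then produces $\Omega^{j+d}(f)=\Omega^{j+d}(f')$.

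The main obstacle is (S1), as it requires both the transport along $\eta_X,\eta_Y$ and careful bookkeeping with the structural isomorphism $\delta^i$. Given $g\colon F(X)\to F(Y)$, set $g'=F(\eta_Y)\circ g\circ F(\eta_X)^{-1}\colon F(G(X))\to F(G(Y))$, and apply (Sa3) to $G(X),G(Y)\in\mathcal{X}$ to obtain $j\geq 0$ and $f'\colon \Omega^{j}(G(X))\to \Omega^{j}(G(Y))$ with $(\Omega')^{j}(g')=\delta^{j}_{G(Y)}\circ F(f')\circ (\delta^{j}_{G(X)})^{-1}$. Setting $i=j+d$, I would define
\[ f=\Omega^{i}(\eta_Y)^{-1}\circ \Omega^{d}(f')\circ \Omega^{i}(\eta_X)\colon \Omega^{i}(X)\to \Omega^{i}(Y),\]
and then verify $(\Omega')^{i}(g)=\delta^{i}_{Y}\circ F(f)\circ (\delta^{i}_{X})^{-1}$ by a diagram chase using naturality of $\delta^{i}$ against $\eta_X$ and $\eta_Y$ together with the inductive identity $\delta^{i+1}=(\Omega')^{i}\delta\circ \delta^{i}\Omega$. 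This final bookkeeping with the natural isomorphisms is where one has to be most attentive, but it is otherwise routine.
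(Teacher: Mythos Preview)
Your proposal is correct and follows essentially the same route as the paper: transport along the replacement morphisms $\eta_X$, $\eta_Y$ into $\mathcal{X}$, invoke (Sa3) there, and transport back using that sufficiently high powers $\Omega^i(\eta_X)$, $\Omega^i(\eta_Y)$ are isomorphisms. The only cosmetic difference is that the paper absorbs your separation of $j$ and $d$ into the phrase ``we may assume that $i$ is large enough'', and it leaves the final $\delta^i$-bookkeeping for (S1) entirely implicit, whereas you sketch the ingredients.
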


\begin{proof}
It suffices to verify (S1) and (S2). For (S1), take any morphism $g\colon FX\rightarrow FY$ in $\mathcal{D}$. Consider $g'=F(\eta_Y) \circ g\circ F(\eta_X)^{-1}\colon FG(X)\rightarrow FG(Y)$. Then by (Sa3) there exist $i\geq 0$  and $f'\colon \Omega^i (GX)\rightarrow \Omega^i (GY)$ with ${\Omega'}^i(g')=\delta^i_{GY}\circ F(f')\circ (\delta^i_{GX})^{-1}$. We may assume that $i$ is large enough such that both $\Omega^i(\eta_X)$ and $\Omega^i(\eta_Y)$ are isomorphisms. Take $f=(\Omega^i(\eta_Y))^{-1}\circ f'\circ \Omega^i(\eta_X)$, which is the required morphism in (S1).

Let $f, f'\colon X\rightarrow Y$ be morphisms with $F(f)=F(f')$. Applying (Sa2) and using the isomorphisms $F(\eta_X)$ and $F(\eta_Y)$, we have $FG(f)=FG(f')$. By (Sa3), we have ${\Omega}^iG(f)=\Omega^i G(f')$ for some $i\geq 0$. We assume that $i$ is large enough such that both $\Omega^i(\eta_X)$ and $\Omega^i(\eta_Y)$ are isomorphisms.  Then we infer from (Sa2) that $\Omega^i(f)=\Omega^i(f')$. We are done with (S2).
\end{proof}

We will specialize the consideration to left triangulated categories. A looped category $(\mathcal{C}, \Omega)$ is \emph{additive} provided that $\mathcal{C}$ is an additive category and the loop functor $\Omega$ is an additive functor. We recall that a  \emph{left triangulated category} $(\mathcal{C}, \Omega, \mathcal{E})$  consists of an additive looped category  $(\mathcal{C}, \Omega)$ and  a class $\mathcal{E}$ of left triangles in $\mathcal{C}$ satisfying certain axioms. If in addition the category $(\mathcal{C}, \Omega)$ is stable, we call $(\mathcal{C}, \Omega, \mathcal{E})$ a triangulated category, where the \emph{translation functor} $\Sigma$ of $\mathcal{C}$ is a quasi-inverse of $\Omega$. This notion is equivalent to the original one of a triangulated category in the sense of Verdier.  For details, we refer to \cite{BM} and compare \cite{KV}.

In what follows, we write $\mathcal{C}$ for the left triangulated category $(\mathcal{C}, \Omega, \mathcal{E})$. A looped functor $(F, \delta)$ between two left triangulated categories $\mathcal{C}$ and $\mathcal{C}'=(\mathcal{C}', \Omega', \mathcal{E}')$ is called a \emph{triangle functor} if $F$ is an additive functor and sends left triangles to left triangles. A triangle functor which is a pre-stable equivalence is called a \emph{pre-triangle equivalence}. Two left triangulated categories $\mathcal{C}$ and $\mathcal{C}'$ are \emph{pre-triangle quasi-equivalent} if they are pre-stably quasi-equivalent such that all  the categories in (\ref{equ:2}) are left triangulated  and all the pre-stable equivalences connecting them are pre-triangle equivalences.

 For a left triangulated category $\mathcal{C}=(\mathcal{C}, \Omega, \mathcal{E})$, the stabilized category $\mathcal{S}(\mathcal{C}):=(\mathcal{S}(\mathcal{C}, \Omega), \tilde{\Omega}, \tilde{\mathcal{E}})$ is a triangulated category, where the translation functor $\Sigma=(\tilde{\Omega})^{-1}$ and the triangles in  $\tilde{\mathcal{E}}$ are induced by the left triangles in $\mathcal{E}$; see \cite[Section 3]{Bel2000}.

\begin{cor}\label{cor:tri}
 Let $\mathcal{C}$ and $\mathcal{C}'$ be two left triangulated  categories which are pre-triangle  quasi-equivalent. Then there is a triangle equivalence $\mathcal{S}(\mathcal{C}) \stackrel{\sim}\longrightarrow \mathcal{S}(\mathcal{C}')$. \hfill $\square$
\end{cor}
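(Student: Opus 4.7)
The plan is to upgrade the looped equivalence produced by the previous corollary to a triangle equivalence by showing that, at each link of the zigzag, the stabilization of a pre-triangle equivalence respects the induced triangulated structures on the stable categories.

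First I would apply the preceding corollary to the chain (\ref{equ:2}) to obtain, for each link, a looped equivalence $\mathcal{S}(F_i,\delta_i)\colon \mathcal{S}(\mathcal{C}_i,\Omega_i)\to \mathcal{S}(\mathcal{C}_{i+1},\Omega_{i+1})$ (or its counterpart in the reverse direction). Composing these — and inverting where necessary, using that an equivalence of strictly stable looped categories has a looped quasi-inverse — yields a looped equivalence $\mathcal{S}(\mathcal{C})\stackrel{\sim}\longrightarrow \mathcal{S}(\mathcal{C}')$. What remains is to verify that this equivalence is compatible with the triangulated structures $(\tilde{\Omega},\tilde{\mathcal{E}})$ and $(\tilde{\Omega}',\tilde{\mathcal{E}}')$ described at the end of the excerpt.

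Next I would check that for any pre-triangle equivalence $(F,\delta)\colon \mathcal{C}\to \mathcal{C}'$ between left triangulated categories, the stabilization $\mathcal{S}(F,\delta)$ is a triangle functor. The loop-compatibility (hence compatibility with the translation functors $\Sigma=\tilde{\Omega}^{-1}$ and $\Sigma'=\tilde{\Omega}'^{-1}$) is automatic since $\mathcal{S}(F,\delta)$ is strictly looped by construction. For the triangles, recall from \cite[Section 3]{Bel2000} that $\tilde{\mathcal{E}}$ is generated, up to isomorphism and shift by $\tilde{\Omega}$, by the images under $\mathbf{S}$ of left triangles in $\mathcal{E}$. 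Since $F$ sends left triangles in $\mathcal{E}$ to left triangles in $\mathcal{E}'$, and since $\mathcal{S}(F,\delta)\circ \mathbf{S}=\mathbf{S}\circ F$ up to the structural isomorphism, the generating triangles of $\tilde{\mathcal{E}}$ are mapped into $\tilde{\mathcal{E}}'$; stability of $\tilde{\mathcal{E}}'$ under $\tilde{\Omega}'^{\pm 1}$ and isomorphism then gives $\mathcal{S}(F,\delta)(\tilde{\mathcal{E}})\subseteq \tilde{\mathcal{E}}'$.

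Finally, a looped quasi-inverse of a triangle equivalence between triangulated categories is automatically a triangle functor, so the composition built in the first step is a triangle equivalence $\mathcal{S}(\mathcal{C})\stackrel{\sim}\longrightarrow \mathcal{S}(\mathcal{C}')$. The main obstacle I expect is purely bookkeeping: tracking how the natural isomorphism $\delta$ interacts with the structural isomorphisms $\theta$ to ensure that the images of left triangles are genuinely left triangles in $\mathcal{C}'$ (and not merely isomorphic after inserting signs coming from $\delta$), and that reversed arrows in the zigzag (\ref{equ:2}) likewise produce triangle functors on the stabilizations. Once these compatibilities are in place, Proposition~\ref{prop:stab} and the description of $\tilde{\mathcal{E}}$ give the conclusion with no further work.
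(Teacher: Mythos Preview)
Your proposal is correct and follows exactly the route the paper intends: the paper gives no proof beyond the $\square$, treating the statement as immediate from Proposition~\ref{prop:stab}, Corollary~2.4, and the description of the triangulated structure on the stabilization. Your write-up simply spells out the details the paper leaves implicit---that $\mathcal{S}(F,\delta)$ is a triangle functor because it is strictly looped and sends the generating triangles $\mathbf{S}(\mathcal{E})$ into $\tilde{\mathcal{E}}'$, and that quasi-inverses and composites of triangle equivalences are again triangle equivalences.
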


\section{The singularity categories and singular equivalences}

In this section, we recall the notion of the singularity category of an algebra. We observe that for two algebras whose stable module categories are pre-triangle quasi-equivalent, their singularity categories are triangle equivalent; see Proposition \ref{prop:key} and compare Proposition \ref{prop:key-inv}.

Let $k$ be a commutative artinian ring with a unit. We emphasize that all the functors and categories are required to be $k$-linear in this section.

 Let $A$ be an artin $k$-algebra. We denote by $A\mbox{-mod}$ the category of finitely generated left $A$-modules, and by $A\mbox{-proj}$ the full subcategory consisting of projective modules.  We denote by $A\mbox{-\underline{mod}}$
the \emph{stable category} of $A\mbox{-mod}$ modulo projective modules (\cite[p.104]{ARS}). The morphism space $\underline{\rm Hom}_A(M, N)$ of two modules $M$ and $N$ in $A\mbox{-\underline{mod}}$ is defined to be ${\rm Hom}_A(M, N)/\mathbf{p}(M, N)$, where $\mathbf{p}(M, N)$ denotes the $k$-submodule formed by morphisms that factor through projective modules. For a morphism $f\colon M\rightarrow N$, we write $\bar{f}$ for its image in $\underline{\rm Hom}_A(M, N)$.

Recall that for an $A$-module $M$, its syzygy $\Omega_A(M)$ is the kernel of its projective cover $P(M)\stackrel{p_M}\rightarrow M$. We fix for $M$ a short exact sequence $0\rightarrow \Omega_A(M)\stackrel{i_M} \rightarrow P(M)\stackrel{p_M}\rightarrow M\rightarrow 0$. This gives rise to the \emph{syzygy functor} $\Omega_A\colon A\mbox{-\underline{mod}} \rightarrow A\mbox{-\underline{mod}}$; see \cite[p.124]{ARS}. Indeed, $A\mbox{-\underline{mod}}:=(A\mbox{-\underline{mod}}, \Omega_A, \mathcal{E}_A)$ is a left triangulated category, where $\mathcal{E}_A$ consists of  left triangles  that are induced from short exact sequences in $A\mbox{-mod}$. More precisely, given a short exact sequence $0\rightarrow X\stackrel{f}\rightarrow Y\stackrel{g}\rightarrow Z \rightarrow 0$, we have the following commutative diagram
\[\xymatrix{
0\ar[r] & \Omega_A(Z) \ar@{.>}[d]^{h} \ar[r]^-{i_Z} & P(Z) \ar@{.>}[d] \ar[r]^-{p_Z} & Z \ar[r] \ar@{=}[d] & 0\\
0\ar[r] & X \ar[r]^-{f} & Y \ar[r]^-{g} & Z \ar[r] & 0.
}\]
Then $\Omega_A(Z)\stackrel{\bar{h}} \rightarrow X \stackrel{\bar{f}} \rightarrow Y \stackrel{\bar{g}} \rightarrow Z$ is a left triangle in $\mathcal{E}_A$.  As recalled in Section 2, the stabilized category $\mathcal{S}(A\mbox{-\underline{mod}})$ is a triangulated category.

 There is a more well-known description of this stabilized category as the singularity category; see \cite{KV}. To recall this, we denote by $\mathbf{D}^b(A\mbox{-mod})$
the bounded derived category of $A\mbox{-mod}$.  We identify an $A$-module $M$ as the corresponding stalk complex concentrated at degree zero, which is also denoted by $M$.

Recall that a complex in $\mathbf{D}^b(A\mbox{-mod})$ is \emph{perfect} provided that it is isomorphic to a bounded complex consisting of projective modules. The full subcategory
consisting of perfect complexes is denoted by ${\rm perf}(A)$, which is a triangulated  subcategory of $\mathbf{D}^b(A\mbox{-mod})$ and is closed under direct summands; see
\cite[Lemma 1.2.1]{Buc}. Following \cite{Or04},  the  \emph{singularity category} of an algebra $A$ is defined to be the quotient triangulated category $\mathbf{D}_{\rm sg}(A)=\mathbf{D}^b(A\mbox{-mod})/{{\rm perf}(A)}$; compare \cite{Buc, KV, Hap91}. We denote by $q\colon \mathbf{D}^b(A\mbox{-mod})\rightarrow \mathbf{D}_{\rm sg}(A)$ the quotient functor.

We denote a complex of $A$-modules by $X^{\bullet}=(X^n, d^n)_{n\in \mathbb{Z}}$, where $X^n$ are
 $A$-modules and the differentials $d^n\colon X^n\rightarrow X^{n+1}$ are homomorphisms of modules satisfying $d^{n+1}\circ d^n=0$. The translation functor $\Sigma$ both on $\mathbf{D}^b(A\mbox{-mod})$ and $\mathbf{D}_{\rm sg}(A)$ sends  a complex $X^\bullet$ to a complex $\Sigma(X^\bullet)$, which is  given by $\Sigma(X)^n=X^{n+1}$ and $d_{\Sigma X}^n=-d^{n+1}_X$.

Consider the following functor
$$F_A\colon A\mbox{-\underline{mod}}\longrightarrow \mathbf{D}_{\rm sg}(A)$$ sending a module $M$ to the corresponding stalk complex concentrated at degree zero, and a morphism $\bar{f}$ to $q(f)$. Here, the well-definedness of $F_A$ on morphisms is due to the fact that a projective module is isomorphic to zero in $\mathbf{D}_{\rm sg}(A)$.

For an $A$-module $M$, we consider the two-term complex $C(M)=\cdots \rightarrow 0\rightarrow P(M)\stackrel{p_M}\rightarrow M\rightarrow 0\rightarrow \cdots$ with $P(M)$ at degree zero. Then we have a quasi-isomorphism $i_M\colon \Omega_A(M)\rightarrow C(M)$. The canonical inclusion ${\rm can}_M\colon \Sigma^{-1} M\rightarrow C(M)$ becomes an isomorphism in $\mathbf{D}_{\rm sg}(A)$. Then we have a natural  isomorphism
$$\delta_M=q({\rm can}_M)^{-1}\circ q(i_M)\colon F_A\Omega_A(M)\longrightarrow \Sigma^{-1}F_A(M).$$
In other words, $(F_A, \delta)\colon (A\mbox{-\underline{mod}}, \Omega_A)\rightarrow (\mathbf{D}_{\rm sg}(A), \Sigma^{-1}) $ is a looped functor. Indeed, $F_A$ is an additive functor and sends left triangles to (left) triangles. Then we have a triangle functor
$$(F_A, \delta)\colon A\mbox{-\underline{mod}} \longrightarrow \mathbf{D}_{\rm sg}(A).$$

Applying the universal property of the stabilization to $(F_A, \delta)$,  we obtain a strictly looped functor
$$\tilde{F}_A \colon \mathcal{S}(A\mbox{-\underline{mod}})\longrightarrow \mathbf{D}_{\rm sg}(A),$$
which is also a triangle functor; see \cite[3.1]{Bel2000}.

The following basic result is due to \cite{KV}. For a detailed proof, we refer to \cite[Corollary 3.9]{Bel2000}.

\begin{lem}\label{lem:KV}
Keep the notation as above. Then $\tilde{F}_A\colon \mathcal{S}(A\mbox{-\underline{\rm mod}})\rightarrow \mathbf{D}_{\rm sg}(A)$ is a triangle equivalence. \hfill $\square$
\end{lem}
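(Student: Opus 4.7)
The plan is to invoke Proposition \ref{prop:stab} applied to the looped functor $(F_A, \delta)\colon (A\mbox{-\underline{mod}}, \Omega_A)\to (\mathbf{D}_{\rm sg}(A), \Sigma^{-1})$. Since $(F_A,\delta)$ has already been identified as a triangle functor, once the pre-stable equivalence conditions (S1), (S2), (S3) are checked, the induced $\tilde{F}_A$ is automatically a triangle equivalence. Throughout, the key technical input is that a bounded complex of $A$-modules, viewed in $\mathbf{D}_{\rm sg}(A)$, may be modified by perfect complexes and shifts without changing its isomorphism class, and that the two-term complex $C(M)$ linking $\Omega_A(M)$ with $\Sigma^{-1}M$ is the mechanism that lets me turn shifts into syzygies.

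For (S3), I would start with an arbitrary object $X^\bullet\in \mathbf{D}^b(A\mbox{-mod})$; after applying a suitable power of $\Sigma^{-1}$ I can assume $X^n=0$ for $n>0$. Taking a bounded-above projective resolution and then brutally truncating in some sufficiently negative degree $-i$, I obtain a perfect complex whose mapping cone is quasi-isomorphic to a single module $M$ placed in degree $-i$, so $\Sigma^{-i}(X^\bullet)\simeq M=F_A(M)$ in $\mathbf{D}_{\rm sg}(A)$. For (S1), a morphism $g\colon F_A(X)\to F_A(Y)$ is represented in the Verdier quotient by a roof $X\xleftarrow{s} Z^\bullet\xrightarrow{h} Y$ with $\mathrm{cone}(s)$ perfect; by replacing $Z^\bullet$ with a projective resolution of $X$ and truncating as above, $s$ can be turned into the $i$-fold iterate of the natural inclusions $i_{\Omega_A^{j}(X)}$, and $h$ restricts on the $(-i)$-th degree to an honest $A$-module map $\Omega_A^i(X)\to \Omega_A^i(Y)$; the explicit formula for $\delta^i$ in terms of the complexes $C(-)$ then shows that the class of this map in the stable category satisfies the identity required in (S1).

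For (S2), if $\bar f,\bar f'\colon X\to Y$ satisfy $F_A(\bar f)=F_A(\bar f')$, then $q(f-f')=0$ in $\mathbf{D}_{\rm sg}(A)$, so $f-f'$ factors in $\mathbf{D}^b(A\mbox{-mod})$ through a perfect complex $P^\bullet$ via a roof. Pulling this factorisation back along a projective resolution of $X$ and truncating in a sufficiently negative degree, I find $i\geq 0$ such that $\Omega_A^i(f-f')$ factors through a projective $A$-module; hence $\Omega_A^i(\bar f)=\Omega_A^i(\bar f')$ in $A\mbox{-\underline{mod}}$.

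The main obstacle I anticipate is in (S1): translating a roof in the triangulated Verdier quotient into a genuine $A$-module map between syzygies requires keeping careful track of the identifications that make $(F_A,\delta)$ a looped functor, in particular matching the canonical quasi-isomorphisms $i_M\colon \Omega_A(M)\to C(M)$ and $\mathrm{can}_M\colon \Sigma^{-1}M\to C(M)$ with the truncation data of the resolution. Once this bookkeeping is carried out---essentially following Keller--Vossieck and the detailed treatment in \cite[Corollary~3.9]{Bel2000}---conditions (S2) and (S3) are comparatively routine, and Proposition \ref{prop:stab} delivers the desired triangle equivalence.
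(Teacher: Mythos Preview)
The paper does not actually prove this lemma: it records the statement with a $\square$ and defers entirely to \cite{KV} and \cite[Corollary~3.9]{Bel2000}. Your sketch is a correct outline of the standard argument found in those references, so there is no meaningful divergence to report. One small adjustment: since $(\mathbf{D}_{\rm sg}(A),\Sigma^{-1})$ is already strictly stable, the cleaner tool is Lemma~\ref{lem:1} rather than Proposition~\ref{prop:stab}; the latter applies only after identifying $\mathcal{S}(\mathbf{D}_{\rm sg}(A),\Sigma^{-1})$ with $\mathbf{D}_{\rm sg}(A)$ itself, which is harmless but an extra step.
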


By a \emph{singular equivalence} between two algebras $A$ and $B$,  we mean a triangle equivalence between their singularity categories.

\begin{prop}\label{prop:key}
Let $A$ and $B$ be two artin algebras. Assume that the stable categories $A\mbox{-\underline{\rm mod}}$ and $B\mbox{-\underline{\rm mod}}$ are pre-triangle quasi-equivalent. Then there is a singular equivalence between $A$ and $B$.
\end{prop}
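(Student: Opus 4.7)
The plan is to assemble the proposition directly from the two main tools already built in the excerpt: the stabilization calibration in Corollary \ref{cor:tri} and the Keller--Vossieck identification in Lemma \ref{lem:KV}. Since the statement is essentially a ``transport'' result, no new construction is needed, only a careful composition of triangle equivalences.

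First I would unpack the hypothesis: by assumption there is a zigzag
\[
A\mbox{-\underline{mod}}=(\mathcal{C}_1,\Omega_1),\ (\mathcal{C}_2,\Omega_2),\ \ldots,\ (\mathcal{C}_n,\Omega_n)=B\mbox{-\underline{mod}}
\]
of left triangulated categories connected by pre-triangle equivalences in either direction. Applying Corollary \ref{cor:tri} to this zigzag (or, equivalently, applying Proposition \ref{prop:stab} at each link and composing with the inverse equivalences where the arrow points backwards) produces a triangle equivalence
\[
\Phi\colon \mathcal{S}(A\mbox{-\underline{mod}}) \stackrel{\sim}{\longrightarrow} \mathcal{S}(B\mbox{-\underline{mod}}).
\]

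Second, I would invoke Lemma \ref{lem:KV} for both algebras, which provides triangle equivalences $\tilde{F}_A\colon \mathcal{S}(A\mbox{-\underline{mod}}) \to \mathbf{D}_{\rm sg}(A)$ and $\tilde{F}_B\colon \mathcal{S}(B\mbox{-\underline{mod}}) \to \mathbf{D}_{\rm sg}(B)$. Composing in the obvious way,
\[
\mathbf{D}_{\rm sg}(A) \xrightarrow{\tilde{F}_A^{-1}} \mathcal{S}(A\mbox{-\underline{mod}}) \xrightarrow{\Phi} \mathcal{S}(B\mbox{-\underline{mod}}) \xrightarrow{\tilde{F}_B} \mathbf{D}_{\rm sg}(B),
\]
yields the desired singular equivalence between $A$ and $B$.

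There is no genuine obstacle: the proposition is a formal corollary of the preceding theory, and the proof is two sentences long. The only mild care needed is to verify that Corollary \ref{cor:tri} indeed applies in the zigzag form (pre-stable equivalences in either direction along the chain), which is immediate because each pre-triangle equivalence induces a triangle equivalence after stabilization by Proposition \ref{prop:stab}, and one simply composes these equivalences (inverting where necessary) to get $\Phi$.
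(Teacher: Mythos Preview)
Your proof is correct and follows exactly the paper's approach: the paper's proof is literally the single sentence ``We just combine Lemma \ref{lem:KV} and Corollary \ref{cor:tri},'' and your write-up simply spells out that combination in detail.
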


\begin{proof}
We just combine Lemma \ref{lem:KV} and Corollary \ref{cor:tri}.
\end{proof}

In the following two examples,  pre-triangle equivalences between stable module categories are explicitly given.

\begin{exm}
{\rm Let $A$ and $B'$ be artin algebras, and let $_AM_{B'}$ be an $A$-$B'$-bimodule. Consider the upper triangular matrix algebra $B=\begin{pmatrix}A& M\\0 & B'\end{pmatrix}$. We recall that a left $B$-module is a column vector $\begin{pmatrix} X\\ Y\end{pmatrix}$, where $_AX$ and $_{B'}Y$ are a left $A$-module and a left $B'$-module with an $A$-module homomorphism $\phi\colon M\otimes_{B'} Y\rightarrow X$; compare \cite[III, Proposition 2.2]{ARS}.

Consider the natural full embedding $i\colon A\mbox{-mod}\rightarrow B\mbox{-mod}$, sending an $A$-module $X$ to $i(X)=\begin{pmatrix} X\\ 0\end{pmatrix}$. Since $i$ preserves projective modules and is exact, it commutes with taking the syzygies. Then we have the induced functor $i\colon A\mbox{-\underline{\rm mod}}\rightarrow  B\mbox{-\underline{\rm mod}}$, which is a triangle functor.

 We claim that the induced functor $i$ is a pre-triangle equivalence if and only if the algebra $B'$ has finite global dimension. In this case,   by Proposition \ref{prop:key} there is a triangle equivalence $\mathbf{D}_{\rm sg}(A)\stackrel{\sim}\longrightarrow \mathbf{D}_{\rm sg}(B)$; compare \cite[Theorem 4.1(1)]{Ch09}.

Indeed, the induced functor $i$ is fully faithful. By Corollary \ref{cor:S3}, we only need  consider the condition (S3). Then we are done by noting the following fact: for any $B$-module $\begin{pmatrix}X\\ Y\end{pmatrix}$ and $d\geq 0$, we have  $\Omega^d\begin{pmatrix}X\\ Y\end{pmatrix}= \begin{pmatrix}X'\\ \Omega_{B'}^d(Y)\end{pmatrix} $ for some $A$-module $X'$. In particular, if $\Omega_{B'}^d(Y)=0$, the $B$-module $\Omega^d\begin{pmatrix}X\\ Y\end{pmatrix}$ lies in the essential image of $i$.}
\end{exm}

The following example is somehow more difficult.

\begin{exm}
{\rm Let $A$ and $B'$ be artin algebras, and let $_{B'}N_{A}$ be an $A$-$B'$-bimodule. Consider the upper triangular matrix algebra $B=\begin{pmatrix}B'& N\\0 & A\end{pmatrix}$.
We assume that $B'$ has finite global dimension.

Consider the natural projection functor  $p\colon B\mbox{-mod}\rightarrow A\mbox{-mod}$, sending an $B$-module $\begin{pmatrix} X\\ Y\end{pmatrix}$ to the $A$-module $Y$. It is an exact functor which sends projective modules to projective modules. Then we have the induced functor $p\colon B\mbox{-\underline{\rm mod}}\rightarrow A\mbox{-\underline{\rm mod}}$, which is a triangle functor.

Take $\mathcal{X}$ to be the full subcategory of $ B\mbox{-\underline{\rm mod}}$ consisting of  modules of the form $\begin{pmatrix} 0 \\ Y \end{pmatrix}$. We claim that $\mathcal{X}$ is a saturated subcategory of $B\mbox{-\underline{\rm mod}}$. Then by Lemma \ref{lem:stab} the induced functor $p$ is a pre-triangle equivalence. Therefore, by Proposition \ref{prop:key} there is a triangle equivalence $\mathbf{D}_{\rm sg}(B)\stackrel{\sim}\longrightarrow \mathbf{D}_{\rm sg}(A)$; compare \cite[Theorem 4.1(2)]{Ch09}.

We now prove the claim. For a $B$-module $C=\begin{pmatrix} X\\ Y\end{pmatrix}$, we consider the projection $\eta_C\colon \begin{pmatrix} X\\ Y\end{pmatrix}\rightarrow G(C)=\begin{pmatrix} 0\\ Y\end{pmatrix}$. Since its kernel has finite projective dimension, it follows that $\Omega_B^d(\eta_C)$ is an isomorphism for $d$ large enough. We observe that $p(\eta_C)$ is an isomorphism. Then we have (Sa1).

 The conditions (Sa2) and (Sa3) are trivial. Here for (S2) in $\mathcal{X}$, we use the following fact:  if a morphism $f\colon Y\rightarrow Y'$ of $A$-module factors through a projective $A$-module $P$, then the morphism $\begin{pmatrix} 0 \\ f \end{pmatrix}\colon \begin{pmatrix} 0 \\ Y \end{pmatrix}\rightarrow \begin{pmatrix} 0 \\ Y' \end{pmatrix}$ of $B$-modules  factors though $\begin{pmatrix} 0 \\ P \end{pmatrix}$, which has finite projective dimension; consequently,  we have  $\Omega_B^d \begin{pmatrix} 0 \\ f \end{pmatrix}=0$ for $d$ large enough.
}
\end{exm}

 Let $M$ be a left $A$-module. Then $M^*={\rm Hom}_A(M, A)$ is a right $A$-module. Recall that an $A$-module $M$ is \emph{Gorenstein-projective} provided that there is an acyclic complex $P^\bullet$ of projective $A$-modules such that the Hom-complex $(P^\bullet)^*={\rm Hom}_A(P^\bullet, A)$ is still acyclic and that $M$ is isomorphic to a certain cocycle $Z^i(P^\bullet)$ of $P^\bullet$.

  We denote by $A\mbox{-Gproj}$ the full subcategory of $A\mbox{-mod}$ formed by Gorenstein-projective $A$-modules. We observe that $A\mbox{-proj}\subseteq A\mbox{-Gproj}$.  We recall that the full subcategory $A\mbox{-Gproj}\subseteq A\mbox{-mod}$ is closed under direct summands, kernels of epimorphisms and extensions; compare \cite[(3.11)]{ABr}. In particular, for a Gorenstein-projective $A$-module $M$ all its syzygies $\Omega_A^i(M)$ are Gorenstein-projective.

Since $A\mbox{-Gproj}\subseteq A\mbox{-mod}$  is closed under
extensions, it becomes naturally an exact category in the sense of
Quillen \cite{Qui73}. Moreover, it is a \emph{Frobenius category},
that is, it has enough (relatively) projective and enough
(relatively) injective objects, and the class of projective objects
coincides with the class of injective objects. In fact, the class of
 projective-injective objects in $A\mbox{-Gproj}$ equals
$A\mbox{-proj}$. For details, we compare \cite[Proposition 2.13]{Bel2000}.

We denote by $A\mbox{-\underline{Gproj}}$ the full subcategory of $A\mbox{-\underline{mod}}$ consisting of Gorenstein-projective $A$-modules. Then the syzygy functor
$\Omega_A$ restricts to an auto-equivalence $\Omega_A\colon
A\mbox{-\underline{Gproj}}\rightarrow A\mbox{-\underline{Gproj}}$. Moreover, the stable category $A\mbox{-\underline{Gproj}}$ becomes a triangulated category such that  the translation functor is given by a quasi-inverse of $\Omega_A$, and that the triangles are induced by short exact sequences in $A\mbox{-Gproj}$. These are consequences of a general result in  \cite[Chapter I.2]{Hap88}. The inclusion functor ${\rm inc}\colon A\mbox{-\underline{Gproj}}\rightarrow A\mbox{-\underline{mod}}$ is a triangle functor between left triangulated categories.
We consider the composite of triangle functors
$$G_A\colon A\mbox{-\underline{Gproj}} \stackrel{\rm inc}\longrightarrow A\mbox{-\underline{mod}} \stackrel{F_A}\longrightarrow \mathbf{D}_{\rm sg}(A).$$

Let $M, N$ be Gorenstein-projective $A$-modules. By the fully-faithfulness of the functor  $\Omega_A\colon
A\mbox{-\underline{Gproj}}\rightarrow A\mbox{-\underline{Gproj}}$, the natural map
$$\underline{\rm Hom}_A(M, N)\longrightarrow {\rm Hom}_{\mathcal{S}(A\mbox{-}\underline{\rm mod})} (M, N)$$
induced by the stabilization functor $\mathbf{S}\colon A\mbox{-}\underline{\rm mod} \rightarrow  \mathcal{S}(A\mbox{-}\underline{\rm mod})$ is an isomorphism.  We identify $\mathcal{S}(A\mbox{-\underline{mod}})$ with $\mathbf{D}_{\rm sg}(A)$ by Lemma \ref{lem:KV}. Then this isomorphism implies that the triangle functor $G_A$ is fully faithful; compare \cite[Theorem 4.1]{Buc} and \cite[Theorem 4.6]{Hap91}.

Recall from \cite{Buc, Hap91} that an artin algebra $A$ is \emph{Gorenstein} if the regular module $A$ has finite injective dimension on both sides. Indeed, the two injective dimensions equal. We mention that a selfinjective algebra is Gorenstein, where any module is Gorenstein-projective.

The following result is also known. As a consequence, for a selfinjective algebra $A$ the stable module category $A\mbox{-\underline{mod}}$ and $\mathbf{D}_{\rm sg}(A)$ are triangle equivalent; see \cite[Theorem 2.1]{Ric}.

\begin{lem}\label{lem:2}
Let $A$ be an artin algebra. Then the following statements are equivalent.
\begin{enumerate}
\item The algebra $A$ is Gorenstein.
\item The inclusion functor  ${\rm inc}\colon A\mbox{-\underline{\rm Gproj}}\rightarrow A\mbox{-\underline{\rm mod}}$ is a pre-triangle equivalence.
\item The functor $G_A\colon A\mbox{-\underline{\rm Gproj}}\rightarrow \mathbf{D}_{\rm sg}(A)$ is a triangle equivalence.
\end{enumerate}
\end{lem}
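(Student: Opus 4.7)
The plan is to prove the equivalence by deducing $(2) \Leftrightarrow (3)$ formally and then establishing $(1) \Rightarrow (2)$ and $(3) \Rightarrow (1)$. Since the syzygy functor is an auto-equivalence on $A\mbox{-\underline{Gproj}}$, that left triangulated category is already stable and coincides with its own stabilization. The universal property of stabilization therefore produces a unique strictly looped functor $\mathcal{S}({\rm inc})\colon A\mbox{-\underline{Gproj}} \to \mathcal{S}(A\mbox{-\underline{mod}})$ satisfying $\tilde{F}_A \circ \mathcal{S}({\rm inc}) = G_A$. Because $\tilde{F}_A$ is a triangle equivalence by Lemma \ref{lem:KV}, $G_A$ is a triangle equivalence if and only if $\mathcal{S}({\rm inc})$ is, and by Proposition \ref{prop:stab} this is equivalent to ${\rm inc}$ being a pre-triangle equivalence; this yields $(2) \Leftrightarrow (3)$.

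For $(1) \Rightarrow (2)$ I would apply Corollary \ref{cor:S3}: since the inclusion ${\rm inc}$ is fully faithful, only condition (S3) needs to be verified. Assuming $A$ is Gorenstein with common selfinjective dimension $d$, I would invoke the classical fact that for every $M \in A\mbox{-mod}$ the $d$-th syzygy $\Omega_A^d(M)$ is Gorenstein-projective. This is proved by splicing a projective resolution of $M$ with a projective coresolution of $\Omega_A^d(M)$, obtained by applying ${\rm Hom}_A(-, A)$ to a projective resolution in $A^{\rm op}\mbox{-mod}$ of the Auslander transpose; the ${\rm Hom}_A(-, A)$-acyclicity required of the spliced complex is forced by the two-sided vanishing $\mathrm{Ext}^{>d}_A(-, A) = 0$. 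Then (S3) holds with $i = d$ and $X = \Omega_A^d(M)$.

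For $(3) \Rightarrow (1)$, I would use essential surjectivity of $G_A$. Combined with the identification $\mathcal{S}(A\mbox{-\underline{mod}}) \simeq \mathbf{D}_{\rm sg}(A)$ from Lemma \ref{lem:KV} and the colimit description of Hom-sets in the stabilization, it forces, for each $M \in A\mbox{-mod}$, an integer $i \geq 0$ and a Gorenstein-projective $X$ with $\Omega_A^i(M) \simeq \Omega_A^i(X)$ in $A\mbox{-\underline{mod}}$. Rewriting stable isomorphism as genuine isomorphism modulo projective summands yields $\Omega_A^i(M) \oplus P \simeq \Omega_A^i(X) \oplus Q$ in $A\mbox{-mod}$ for suitable projectives $P, Q$; since the right-hand side is Gorenstein-projective and $A\mbox{-Gproj}$ is closed under direct summands, $\Omega_A^i(M)$ itself is Gorenstein-projective. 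Hence every finitely generated $A$-module has finite Gorenstein-projective dimension, and a classical result (Hoshino, Beligiannis) then yields that $A$ must be Gorenstein.

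The main obstacle is this last invocation of the literature in $(3) \Rightarrow (1)$: passing from the finiteness of Gorenstein-projective dimensions for all left $A$-modules to the two-sided finiteness of $\mathrm{id}(A)$ is the genuinely non-formal step. The standard argument analyzes a finite Gorenstein-projective resolution of the injective cogenerator $D(A_A)$ together with the rigidity of totally acyclic projective complexes to bound $\mathrm{id}(_A A)$, and a parallel or duality argument handles the right-sided bound. In a writeup I would simply cite this rather than reproduce it, since the remaining implications are essentially formal consequences of the stabilization machinery already built in Sections 2 and 3.
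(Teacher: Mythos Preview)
Your proposal is correct and uses essentially the same ingredients as the paper: the stabilization identification for $(2)\Leftrightarrow(3)$, and Hoshino's characterization of Gorensteinness for the link with $(1)$. The paper is slightly more streamlined in that it cites \cite{Hoshi} bidirectionally ($A$ is Gorenstein if and only if every module has a Gorenstein-projective syzygy) and thereby gets $(1)\Leftrightarrow(2)$ in one stroke via Corollary~\ref{cor:S3}, since for the fully faithful inclusion condition~(S3) is literally Hoshino's condition; your separate argument for $(3)\Rightarrow(1)$ is fine but re-derives~(S3) from essential surjectivity, which you already have once $(3)\Rightarrow(2)$ is in hand. The step you flag as non-formal is precisely what the paper, too, outsources to the citation of \cite{Hoshi}.
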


  \begin{proof}
  Recall that $A$ is Gorenstein if and only if for any module $X$, there exists $d\geq 0$ with $\Omega_A^d(X)$ Gorenstein-projective; see \cite{Hoshi}. The inclusion functor in (2) is fully faithful. By Corollary \ref{cor:S3} it is a pre-triangle equivalence if and only if the condition (S3) in $A\mbox{-\underline{mod}}$ is satisfied. Then the equivalence ``$(1)\Leftrightarrow(2)$" follows.

Since $\Omega_A\colon A\mbox{-\underline{\rm Gproj}}\rightarrow A\mbox{-\underline{\rm Gproj}}$ is an auto-equivalence, we identify $A\mbox{-\underline{\rm Gproj}}$ with its stabilization $\mathcal{S}(A\mbox{-\underline{\rm Gproj}})$. By Lemma \ref{lem:KV}, we identify  $\mathbf{D}_{\rm sg}(A)$ with $\mathcal{S}(A\mbox{-\underline{mod}})$. Then the functor $G_A$ is identified with the stabilization of the inclusion functor in (2). Then the equivalence ``$(2)\Leftrightarrow(3)$" follows from Proposition \ref{prop:stab}.
  \end{proof}

Recall from \cite{BJO} that the \emph{Gorenstein defect category} of an algebra $A$ is defined to be the Verdier quotient category $\mathbf{D}_{\rm def}(A)=\mathbf{D}_{\rm sg}(A)/{{\rm Im}\; G_A}$, where ${\rm Im}\; G_A$ denotes the essential image of the fully-faithful triangle functor $G_A$ and thus is a triangulated subcategory of $\mathbf{D}_{\rm sg}(A)$. By Lemma \ref{lem:2}(3), the algebra $A$ is Gorenstein if and only if $\mathbf{D}_{\rm def}(A)$ is trivial; see also \cite{BJO}.

The following observation implies that pre-triangle equivalences seem to be  ubiquitous in the study of singular equivalences; compare Proposition \ref{prop:key}.

\begin{prop}\label{prop:key-inv}
Let $A$ and $B$ be artin algebras. Assume that $B$ is a Gorenstein algebra and that there is a singular equivalence between $A$ and $B$. Then there is a pre-triangle equivalence from $A\mbox{-\underline{\rm mod}}$ to $B\mbox{-\underline{\rm mod}}$.
\end{prop}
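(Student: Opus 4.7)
The plan is to construct $H\colon A\mbox{-}\underline{\rm mod}\to B\mbox{-}\underline{\rm mod}$ as a composition of four triangle functors, each individually a pre-triangle equivalence, and then conclude by a functoriality-of-stabilization argument via Proposition~\ref{prop:stab}. Let $\Phi\colon \mathbf{D}_{\rm sg}(A)\to \mathbf{D}_{\rm sg}(B)$ denote the given triangle equivalence. Since $B$ is Gorenstein, Lemma~\ref{lem:2} simultaneously yields that the inclusion ${\rm inc}\colon B\mbox{-}\underline{\rm Gproj}\to B\mbox{-}\underline{\rm mod}$ is a pre-triangle equivalence and that $G_B\colon B\mbox{-}\underline{\rm Gproj}\to \mathbf{D}_{\rm sg}(B)$ is a triangle equivalence; I fix a triangle quasi-inverse $\Psi$ of $G_B$. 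Lemma~\ref{lem:KV} asserts that $\tilde{F}_A\colon \mathcal{S}(A\mbox{-}\underline{\rm mod})\to \mathbf{D}_{\rm sg}(A)$ is a triangle equivalence, so by Proposition~\ref{prop:stab} the triangle functor $F_A$ itself is already a pre-triangle equivalence.

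I would then set $H:={\rm inc}\circ \Psi\circ \Phi\circ F_A$, a composite of triangle functors between left triangulated categories and hence a triangle functor. To show $H$ is a pre-triangle equivalence, by Proposition~\ref{prop:stab} it suffices to verify that $\mathcal{S}(H)$ is an equivalence. I would first establish that stabilization is functorial with respect to composition of looped functors, by noting that both $\mathcal{S}(F'\circ F)$ and $\mathcal{S}(F')\circ \mathcal{S}(F)$ are strictly looped functors fitting into the universal diagram of the composite $F'\circ F$, and therefore coincide by the uniqueness clause of the universal property. Applied here, this gives $\mathcal{S}(H)=\mathcal{S}({\rm inc})\circ \mathcal{S}(\Psi)\circ \mathcal{S}(\Phi)\circ \mathcal{S}(F_A)$, and each factor is an equivalence: $\mathcal{S}(F_A)=\tilde{F}_A$ by Lemma~\ref{lem:KV}; $\mathcal{S}({\rm inc})$ by Lemma~\ref{lem:2} via the Gorenstein hypothesis on $B$; and $\mathcal{S}(\Phi)$, $\mathcal{S}(\Psi)$ because the source and target of each are already stable, so under the canonical identifications via $\mathbf{S}$ they agree with the equivalences $\Phi$ and $\Psi$ themselves. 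The composite $\mathcal{S}(H)$ is then an equivalence, yielding the required pre-triangle equivalence.

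The one subtle point I expect to encounter is precisely the functoriality of stabilization under composition. This is morally forced by the universal property recalled in Section~2, but it requires careful bookkeeping of the structure isomorphisms $\delta$ appearing in the data of a looped functor — specifically verifying that $\mathcal{S}(F')\circ \mathcal{S}(F)$ is strictly looped when equipped with the structure map corresponding to the composite $\delta$ of $F'\circ F$, and that it indeed equals $\mathbf{S}\circ F'\circ F$ after precomposition with $\mathbf{S}$. Once this bookkeeping is settled, the rest of the argument is a direct assembly of Lemmas~\ref{lem:KV} and~\ref{lem:2} with Proposition~\ref{prop:stab}, and the construction of $H$ gives an explicit pre-triangle equivalence realizing the hypothesized singular equivalence on the stable module level.
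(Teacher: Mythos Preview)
Your proof is correct and follows essentially the same route as the paper: both construct the desired functor as the composite ${\rm inc}\circ(\text{quasi-inverse of }G_B)\circ\Phi\circ F_A$ and argue that each factor is a pre-triangle equivalence. The paper simply asserts that a composite of pre-triangle equivalences is again one, whereas you supply the justification via functoriality of stabilization---so your version is in fact more complete on precisely the point you flagged as subtle.
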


\begin{proof}
Using the triangle equivalence $G_B$, we have a triangle equivalence $H\colon \mathbf{D}_{\rm sg}(A)\rightarrow B\mbox{-\underline{Gproj}}$. Then we have the following composite of triangle functors
$$F\colon A\mbox{-\underline{\rm mod}}\stackrel{F_A}\longrightarrow \mathbf{D}_{\rm sg}(A) \stackrel{H}\longrightarrow B\mbox{-\underline{Gproj}}\stackrel{\rm inc}\longrightarrow B\mbox{-\underline{mod}}.$$
We claim that $F$ is a pre-triangle equivalence.

 Indeed, the functor $F_A$ is a pre-triangle equivalence by Lemma \ref{lem:KV}, where we identify $\mathbf{D}_{\rm sg}(A)$ with its stabilization $\mathcal{S}(\mathbf{D}_{\rm sg}(A))$. The inclusion functor is a pre-triangle equivalence by Lemma \ref{lem:2}(2). Therefore,  all the three functors above are pre-triangle equivalences. Then as their composition, so is the functor $F$.
\end{proof}

\section{The singularity category of a quadratic monomial algebra}

In this section, we study the singularity category of a quadratic monomial algebra $A$. We consider the  algebra $B$ with radical square zero that is defined by  the relation quiver of $A$. The main result claims that there is a pre-triangle quasi-equivalence between the stable $A$-module category and the stable $B$-module category. Consequently, we obtain an explicit singular equivalence between $A$ and $B$.

Let $Q=(Q_0, Q_1; s,t)$  be a finite quiver, where $Q_0$ is the set of vertices, $Q_1$ the set of arrows, and  $s,t\colon Q_1\rightarrow Q_0$ are maps which assign to each arrow $\alpha$ its starting vertex $s(\alpha)$ and its terminating vertex $t(\alpha)$.  A path $p$ of length $n$ in $Q$ is a sequence $p=\alpha_n\cdots \alpha_2\alpha_1$ of arrows such that $s(\alpha_i)=t(\alpha_{i-1})$ for $2\leq i\leq n$; moreover, we define its starting vertex $s(p)=s(\alpha_1)$ and its terminating vertex $t(p)=t(\alpha_n)$.  We observe that a path of length one is just an arrow. To each vertex $i$, we associate a trivial path $e_i$ of length zero, and set $s(e_i)=i=t(e_i)$.

For two paths $p$ and $q$ with $s(p)=t(q)$, we write $pq$ for their concatenation. As convention, we have $p=pe_{s(p)}=e_{t(p)}p$. For two paths $p$ and $q$ in $Q$, we say that $q$ is a sub-path of $p$ provided that $p=p''qp'$ for some paths $p''$ and $p'$.

Let $k$ be a field. The path algebra $kQ$ of a finite quiver $Q$ is defined as follows. As a $k$-vector space, it has a basis given by all the paths in $Q$. For two paths $p$ and $q$, their multiplication is given by the concatenation $pq$ if $s(p)=t(q)$, and it is zero, otherwise.  The unit of $kQ$ equals  $\sum_{i\in Q_0}e_i$. Denote by $J$ the two-sided ideal of $kQ$ generated by arrows. Then $J^d$ is spanned by all the paths of length at least $d$ for each $d\geq 2$. A two-sided ideal $I$ of $kQ$ is \emph{admissible} provided that $J^d\subseteq I\subseteq J^2$ for some $d\geq 2$. In this case, the quotient algebra $A=kQ/I$ is finite-dimensional.

We recall that an admissible ideal $I$ of $kQ$ is quadratic monomial provided that it is generated by some paths of length  two. In this case, the quotient algebra $A=kQ/I$ is called a \emph{quadratic monomial algebra}. Observe that the algebra $A$ is with radical square zero if and only if $I=J^2$. We call $kQ/J^2$ the algebra with radical square zero \emph{defined} by the quiver $Q$.

In what follows, $A=kQ/I$ is a quadratic monomial algebra. We denote by $\mathbf{F}$ the set of paths of length two contained in $I$. Following \cite{Zim}, a path $p$ in $Q$ is \emph{nonzero} in $A$ provided that it does not belong to $I$, or equivalently, $p$ does not contain a sub-path in $\mathbf{F}$. In this case, we abuse the image $p+I$ in $A$ with $p$. The set of nonzero paths forms a $k$-basis for $A$. For a path $p$ in $I$, we write $p=0$ in $A$.

For a nonzero path $p$, we consider the left ideal $Ap$ generated by $p$, which has a $k$-basis given by the nonzero paths $q$ such that $q=q'p$ for some path $q'$. We observe that for a vertex $i$, $Ae_i$ is an indecomposable projective $A$-module.   Then we have a projective cover $\pi_p\colon Ae_{t(p)}\rightarrow Ap$ sending $e_{t(p)}$ to $p$.

\begin{lem}\label{lem:qmono}
Let $A=kQ/I$ be a quadratic monomial algebra. Then the following statements hold.
\begin{enumerate}
\item For a nonzero path $p=\alpha p'$ with $\alpha$ an arrow, there is an isomorphism $Ap\simeq A\alpha$ of $A$-modules sending $xp$ to $x\alpha$ for any path $x$ with $s(x)=t(p)$.
\item For an arrow $\alpha$, we have  a short exact sequence of $A$-modules
\begin{align}\label{equ:3}
0\longrightarrow \bigoplus_{\{\beta\in Q_1\; |\; \beta\alpha \in \mathbf{F}\}} A\beta \stackrel{\rm inc}\longrightarrow Ae_{t(\alpha)}\stackrel{\pi_\alpha}\longrightarrow A\alpha \longrightarrow 0,
\end{align}
where ``${\rm inc}$" denotes the inclusion map.
\item For any $A$-module $M$, there is  an isomorphism $\Omega_A^2(M)\simeq \bigoplus_{\alpha\in Q_1} (A\alpha)^{n_\alpha}$ for some integers $n_\alpha$.
\end{enumerate}
\end{lem}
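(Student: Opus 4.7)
Part (1) is a direct calculation in $k$-bases: $Ap$ has basis consisting of nonzero paths $q'p = q'\alpha p'$ (as $q'$ ranges over paths with $s(q') = t(p)$), and similarly $A\alpha$ has basis $\{q'\alpha\}$. Since $I$ is generated by paths of length exactly two and $p = \alpha p'$ is nonzero, every length-two subpath of $q'\alpha p'$ lies entirely inside $q'\alpha$ or entirely inside $\alpha p' = p$; only the former can lie in $\mathbf{F}$. Hence $q'\alpha p' \notin I$ if and only if $q'\alpha \notin I$, giving a bijection on bases, and the map $q'p \mapsto q'\alpha$ is evidently $A$-linear.

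For part (2), the projective cover $\pi_\alpha$ sends a basis element $q$ of $Ae_{t(\alpha)}$ (a nonzero path with $s(q) = t(\alpha)$) to $q\alpha \in A\alpha$. By the same length-two analysis, $q\alpha$ vanishes in $A$ exactly when $q$ is nontrivial and factors as $q = q'\beta$ for some arrow $\beta$ with $\beta\alpha \in \mathbf{F}$. Collecting such basis elements across the relevant arrows $\beta$ and using part (1), we identify $\ker\pi_\alpha$ with $\bigoplus_{\beta\alpha \in \mathbf{F}} A\beta$ via the natural inclusion into $Ae_{t(\alpha)} = Ae_{s(\beta)}$, yielding the short exact sequence.

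For part (3), I would first establish the intermediate claim that for every finitely generated $A$-module $N$, the first syzygy $\Omega_A(N)$ decomposes as a direct sum $\bigoplus_p (Ap)^{n_p}$ over nonzero paths $p$ of positive length. Granting this claim, the conclusion follows: by part (1) each summand $Ap$ is isomorphic to $A\alpha_p$ for the leading arrow $\alpha_p$ of $p$, and then by part (2) $\Omega_A(A\alpha_p) \simeq \bigoplus_{\beta\alpha_p \in \mathbf{F}} A\beta$ is a direct sum indexed by arrows. Summing over the decomposition of $\Omega_A(M)$ gives the required $\Omega_A^2(M) \simeq \bigoplus_{\alpha \in Q_1} (A\alpha)^{n_\alpha}$.

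The main obstacle is thus the intermediate claim on decomposing $\Omega_A(N)$. I would approach it by combinatorial analysis of a minimal projective cover $P(N) = \bigoplus_s Ae_{i_s}$: the submodule $\Omega_A(N) \subseteq \mathrm{rad}(P(N)) = \bigoplus_{s,\alpha:s(\alpha)=i_s} A\alpha$ should admit a set of monomial minimal generators, each of the form $p \cdot \iota_s$ with $p$ a nonzero path of positive length and $\iota_s$ a summand inclusion, the existence of which reflects the monomial nature of the defining relations in $A$. Each such generator spans a submodule isomorphic to $Ap$ by part (1); the delicate step, which rests essentially on the quadratic monomial hypothesis, is choosing these generators so that the resulting submodules assemble into an internal direct sum rather than merely a generating set.
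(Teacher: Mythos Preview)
Your arguments for (1) and (2) are correct and match what the paper has in mind (it simply declares them trivial and straightforward, citing \cite{Zim}).

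For (3) there is a genuine gap. Your intermediate claim---that $\Omega_A(N)\simeq\bigoplus_p (Ap)^{n_p}$ over positive-length paths $p$ for \emph{every} module $N$---is strictly stronger than (3) and is false. Take $Q$ with vertices $1,2,3$, two parallel arrows $\alpha,\gamma\colon 1\to 2$, two parallel arrows $\beta,\delta\colon 2\to 3$, and $I=(\beta\gamma,\delta\alpha)$. Let $M$ be the two-dimensional module generated by $m_1$ at vertex $1$ with $\alpha m_1=\gamma m_1=m_2$ (the relations force $\beta m_2=\delta m_2=0$). Then $\Omega_A(M)\subseteq Ae_1$ is spanned by $\alpha-\gamma,\ \beta\alpha,\ \delta\gamma$; it is indecomposable of dimension three and isomorphic to the projective module $Ae_2$, which is not isomorphic to any $Ap$ with $p$ of positive length (the only candidates with top $S_2$ are $A\alpha$ and $A\gamma$, both two-dimensional). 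Your proposed mechanism also fails here: $\Omega_A(M)$ admits no monomial minimal generating set, since the only paths lying in $\Omega_A(M)$ are $\beta\alpha$ and $\delta\gamma$, which together span a proper submodule.

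The paper handles (3) by invoking Zimmermann Huisgen's structure theorem for syzygies over monomial algebras, whose conclusion is about $\Omega^2$ rather than $\Omega^1$. In the example above $\Omega_A(M)$ is projective, so $\Omega_A^2(M)=0$ and (3) holds trivially---this is exactly the kind of phenomenon that forces one to pass to the second syzygy before the decomposition into $A\alpha$'s is guaranteed. If you want a self-contained route, the correct replacement for your claim is to restrict it to submodules of projectives: one shows that for $N\subseteq P$ with $P$ projective, $\Omega_A(N)$ is a direct sum of modules $A\alpha$; since $\Omega_A(M)$ is always such a submodule, this yields (3).
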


 \begin{proof}
(1) is trivial and (2) is straightforward; compare the first paragraph in \cite[p.162]{Zim}. In view of (1), the last statement is a special case of \cite[Theorem I]{Zim}.
 \end{proof}

Let $\alpha$ be an arrow such that the set $\{\beta\in Q_1\; |\; \beta\alpha \in \mathbf{F}\}$ is nonempty. By (\ref{equ:3}), this is equivalent to the condition that the $A$-module $A\alpha$ is non-projective.  Denote by $N(\alpha)=\{\alpha'\in Q_1\; |\; t(\alpha')=t(\alpha), \; \beta\alpha'\in \mathbf{F} \mbox{ for each arrow }\beta \mbox{ satisfying } \beta\alpha\in \mathbf{F}\}$. Set $Z(\alpha)=\bigoplus_{\alpha'\in N(\alpha)} \alpha'A$, which is the right ideal generated by $N(\alpha)$. We observe that $\alpha\in N(\alpha)$.

The second statement of the  following result is analogous to \cite[Lemma 2.3]{CSZ}.

\begin{lem}\label{lem:sthom}
Let $\alpha, \alpha'$ be two arrows. We assume that the set $\{\beta\in Q_1\; |\; \beta\alpha \in \mathbf{F}\}$ is nonempty.   Then we have the following statements.
\begin{enumerate}
\item There is an isomorphism ${\rm Hom}_A(A\alpha, A)\rightarrow Z(\alpha)$ sending $f$ to $f(\alpha)$.
\item There is a $k$-linear isomorphism
\begin{align}
\underline{\rm Hom}_A(A\alpha, A\alpha')=\frac{Z(\alpha)\cap A\alpha'}{Z(\alpha)\alpha'}.
\end{align}
\item If $\alpha'$ does not belong to $N(\alpha)$, we have $\underline{\rm Hom}_A(A\alpha, A\alpha')=0$.
\item If $\alpha'$ belongs to $N(\alpha)$, there is a unique epimorphism  $\pi=\pi_{\alpha, \alpha'}\colon A\alpha \rightarrow A\alpha'$ sending $\alpha$ to $\alpha'$ and $\underline{\rm Hom}_A(A\alpha, A\alpha')=k\bar{\pi}$.
\end{enumerate}
\end{lem}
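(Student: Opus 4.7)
My plan is to parametrize morphisms out of $A\alpha$ by their value on the distinguished generator $\alpha$, using the presentation of $A\alpha$ supplied by the short exact sequence (\ref{equ:3}), and then identify which such morphisms factor through a projective. The main bookkeeping device throughout will be the $k$-basis of $A$ by nonzero paths.

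First I would establish (1). A morphism $f\colon A\alpha\to A$ is determined by $y=f(\alpha)\in e_{t(\alpha)}A$, and by (\ref{equ:3}) such a $y$ yields a well-defined morphism iff $\beta y=0$ in $A$ for every arrow $\beta$ with $\beta\alpha\in\mathbf{F}$. Expanding $y$ in the basis of nonzero paths $q$ ending at $t(\alpha)$, and using that an arrow times a nonzero path vanishes in $A$ precisely when the length-two subpath formed at the concatenation point lies in $\mathbf{F}$, I would show that $y$ is admissible iff every path occurring in $y$ has its leftmost arrow in $N(\alpha)$. The nonemptyness hypothesis on $\{\beta\mid\beta\alpha\in\mathbf{F}\}$ rules out the trivial path $e_{t(\alpha)}$ from $y$, so the admissible $y$ are exactly the elements of $Z(\alpha)$. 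Part (2) then follows from the same analysis applied to morphisms into $A\alpha'$: the image $y=f(\alpha)$ must lie in $A\alpha'$ and satisfy $\beta y=0$ in $A$, giving ${\rm Hom}_A(A\alpha,A\alpha')\cong Z(\alpha)\cap A\alpha'$ via $f\mapsto f(\alpha)$. A morphism from $A\alpha$ to $A\alpha'$ factors through a projective iff it factors through the projective cover $\pi_{\alpha'}\colon Ae_{t(\alpha')}\to A\alpha'$, and such a factorization $f=\pi_{\alpha'}\circ g$ sends $\alpha$ to $g(\alpha)\alpha'$ with $g(\alpha)\in Z(\alpha)$; conversely every element of $Z(\alpha)\alpha'$ arises this way. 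Hence the subspace of projectively-factoring morphisms is exactly $Z(\alpha)\alpha'$, proving (2).

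For (3) and (4) I would analyze the quotient $(Z(\alpha)\cap A\alpha')/Z(\alpha)\alpha'$ on the path basis. A nonzero basis path of $Z(\alpha)\cap A\alpha'$ has leftmost arrow in $N(\alpha)$ and rightmost arrow $\alpha'$. Any such path of length $\geq 2$ factors as $p_0\alpha'$ with $p_0\in Z(\alpha)$, hence lies in $Z(\alpha)\alpha'$. The only length-one possibility is $\alpha'$ itself, which occurs iff $\alpha'\in N(\alpha)$; and $\alpha'\notin Z(\alpha)\alpha'$ since $Z(\alpha)\alpha'$ is spanned by paths of length $\geq 2$. This immediately gives (3) when $\alpha'\notin N(\alpha)$ and shows the quotient is one-dimensional when $\alpha'\in N(\alpha)$. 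Part (4) then follows by observing that the assignment $\alpha\mapsto\alpha'$ extends to a morphism $\pi_{\alpha,\alpha'}\colon A\alpha\to A\alpha'$ precisely by the defining property of $N(\alpha)$ (namely $\beta\alpha'=0$ in $A$ for every $\beta$ with $\beta\alpha\in\mathbf{F}$); this map is clearly surjective and corresponds under (2) to the class of $\alpha'$, hence spans $\underline{\rm Hom}_A(A\alpha,A\alpha')$. I expect the main obstacle to be the careful path-basis bookkeeping in (1), where one must simultaneously detect which concatenations vanish in the quadratic monomial quotient and rule out the trivial-path contribution to $y$; the remaining parts reduce to routine accounting of length-one versus length-$\geq 2$ paths.
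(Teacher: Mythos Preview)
Your proposal is correct and follows essentially the same approach as the paper: both arguments use the presentation (\ref{equ:3}) to identify ${\rm Hom}_A(A\alpha,A)$ with $Z(\alpha)$ via evaluation at $\alpha$, then compute $\mathbf{p}(A\alpha,A\alpha')$ as the image of ${\rm Hom}_A(A\alpha,\pi_{\alpha'})$, and finally analyze the quotient by distinguishing the length-one path $\alpha'$ from the longer paths in $Z(\alpha)\cap A\alpha'$. Your treatment is slightly more explicit about the path-basis bookkeeping (in particular the exclusion of $e_{t(\alpha)}$ and the observation that $Z(\alpha)\alpha'$ consists only of paths of length $\geq 2$), but the underlying ideas are the same.
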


\begin{proof}
We observe that $Z(\alpha)$ has a $k$-basis given by nonzero paths $q$ which satisfy $t(q)=t(\alpha)$ and $\beta q=0$ for each arrow $\beta$ with $\beta\alpha\in \mathbf{F}$. Then we infer (1) by applying ${\rm Hom}_A(-, A)$ to (\ref{equ:3}) and using the canonical  isomorphism ${\rm Hom}_A(Ae_{t(\alpha)}, A)\simeq e_{t(\alpha)}A$.

For (2), we identify for each left ideal $K$ of $A$, ${\rm Hom}_A(A\alpha, K)$ with the subspace of ${\rm Hom}_A(A\alpha, A)$ formed by those morphisms whose image is contained in $K$. Therefore, we identify ${\rm Hom}_A(A\alpha, A\alpha')$ with $Z(\alpha)\cap A\alpha'$, ${\rm Hom}_A(A\alpha, Ae_{t(\alpha')})$ with $Z(\alpha)\cap Ae_{t(\alpha')}$. Recall the projective cover $\pi_{\alpha'}\colon Ae_{t(\alpha')}\rightarrow A\alpha'$. The subspace $\mathbf{p}(A\alpha, A\alpha')$ formed by those morphisms factoring through projective modules equals the image of the map ${\rm Hom}_A(\pi_{\alpha'}, A)$. This image is then identified with $Z(\alpha)\alpha'$. Then the required isomorphism follows.

(3) is an immediate consequence of (2), since in this case we have $Z(\alpha)\cap A\alpha'=Z(\alpha)\alpha'$.

For (4), we observe in this case that $Z(\alpha)\cap A\alpha'=(Z(\alpha)\alpha')\oplus k\alpha'$. It follows from (3) that  $\underline{\rm Hom}_A(A\alpha, A\alpha')$ is one dimensional. The existence of the surjective homomorphism $\pi$ is by the isomorphism in (1), under which $\pi$ corresponds to the element $\alpha'$. Then we are done.
\end{proof}

\begin{rem}\label{rem:1}
Assume that $\alpha'\in N(\alpha)$. In particular, $t(\alpha)=t(\alpha')$. Then we have the following commutative diagram
\[\xymatrix{
0\ar[r] & \bigoplus_{\{\beta\in Q_1\; |\; \beta\alpha \in \mathbf{F}\}} A\beta \ar[d]^-{{\rm inc}} \ar[r]^-{\rm inc} & Ae_{t(\alpha)} \ar@{=}[d] \ar[r]^-{\pi_\alpha} & A\alpha \ar[d]^-{\pi_{\alpha, \alpha'}} \ar[r] & 0\\
0\ar[r] & \bigoplus_{\{\beta\in Q_1\; |\; \beta\alpha' \in \mathbf{F}\}} A\beta \ar[r]^-{\rm inc} & Ae_{t(\alpha)} \ar[r]^-{\pi_{\alpha'}} & A\alpha' \ar[r] & 0.
}\]
The leftmost inclusion uses the fact that $\alpha'\in N(\alpha)$ and thus $\{\beta\in Q_1\; |\; \beta\alpha \in \mathbf{F}\}\subseteq \{\beta\in Q_1\; |\; \beta\alpha' \in \mathbf{F}\}$.
\end{rem}

The following notion is taken from \cite[Section 5]{CSZ}; compare \cite{HS}. Let $A=kQ/I$ be a quadratic monomial algebra. The \emph{relation quiver} $\mathcal{R}_A$ of $A$ is defined as follows. Its vertices are given by arrows in $Q$, and there is an arrow $[\beta \alpha]$ from $\alpha$ to $\beta$ for each element $\beta\alpha$ in $\mathbf{F}$. We consider the algebra $B=k\mathcal{R}_A/J^2$ with radical square zero defined by $\mathcal{R}_A$.

The main result of this paper is as follows.

\begin{thm}\label{thm:main}
Let $A=kQ/I$ be a quadratic monomial algebra, and let $B=k\mathcal{R}_A/J^2$ be the algebra with radical square zero defined by the relation quiver of $A$. Then there is a pre-triangle quasi-equivalence between $A\mbox{-\underline{\rm mod}}$ and $B\mbox{-\underline{\rm mod}}$. Consequently, there is a singular equivalence between $A$ and $B$.
\end{thm}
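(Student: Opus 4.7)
The plan is to produce a length-two zigzag
\[
A\mbox{-\underline{mod}} \xleftarrow{\iota} \mathcal{X} \xrightarrow{\varphi} B\mbox{-\underline{mod}}
\]
of pre-triangle equivalences through a combinatorially defined intermediate left triangulated category $\mathcal{X}$, and then to apply Proposition \ref{prop:key} for the singular equivalence. The key observation is the parallel between Equation (\ref{equ:3}), $\Omega_A(A\alpha)=\bigoplus_{\beta\alpha\in\mathbf{F}}A\beta$, and the corresponding identity $\Omega_B(S_\alpha)={\rm rad}(Be_\alpha)=\bigoplus_{\beta\alpha\in\mathbf{F}}S_\beta$ on the $B$-side (which holds because $B$ is radical square zero). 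Together with Lemma \ref{lem:qmono}(3) on the $A$-side and the fact that every $B$-syzygy is semisimple, this suggests that the combinatorial shape of $\mathcal{R}_A$ controls both stable module categories uniformly.

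I define $\mathcal{X}$ to have one generator $X_\alpha$ for each arrow $\alpha\in Q_1$ that is not a sink in $\mathcal{R}_A$ (equivalently, for which $A\alpha$ is non-projective), loop functor $\Omega_{\mathcal{X}}(X_\alpha)=\bigoplus_{\beta\alpha\in\mathbf{F},\ \beta \text{ non-sink}}X_\beta$, and morphism spaces $\mathrm{Hom}_{\mathcal{X}}(X_\alpha,X_{\alpha'})=k\cdot\mathrm{id}$ when $\alpha=\alpha'$ and $0$ otherwise, extended biadditively to direct sums, with the left triangulated structure inherited from the matching short exact sequences. The functor $\varphi\colon\mathcal{X}\to B\mbox{-\underline{mod}}$ sending $X_\alpha$ to $S_\alpha$ is then fully faithful by construction, since the stable Hom spaces between distinct non-projective simples of $B$ vanish. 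Condition (S3) for $\varphi$ is immediate: every $B$-module has semisimple first syzygy, and every semisimple $B$-module lies in the essential image of $\varphi$. By Corollary \ref{cor:S3}, $\varphi$ is a pre-triangle equivalence.

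The functor $\iota\colon\mathcal{X}\to A\mbox{-\underline{mod}}$ sending $X_\alpha$ to $A\alpha$ is compatible with the loop functor by Equation (\ref{equ:3}). Condition (S3) follows from Lemma \ref{lem:qmono}(3) with $i=2$; condition (S2) holds trivially because, by construction, $\mathcal{X}$ has only scalar multiples of identities between the $X_\alpha$. The crux is condition (S1): given a stable morphism $g\colon A\alpha\to A\alpha'$ in $A\mbox{-\underline{mod}}$, Lemma \ref{lem:sthom}(3)-(4) says $g=c\,\overline{\pi_{\alpha,\alpha'}}$ for some $c\in k$ (with $g=0$ if $\alpha'\notin N(\alpha)$). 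By Remark \ref{rem:1}, $\Omega_A(\pi_{\alpha,\alpha'})$ is the natural summand inclusion $\bigoplus_{\beta\alpha\in\mathbf{F}}A\beta\hookrightarrow\bigoplus_{\beta\alpha'\in\mathbf{F}}A\beta$, which lifts to a morphism $f$ in $\mathcal{X}$ given by $c$ times the identity on matching $X_\beta$ summands and zero on the ``extra'' summands of the target; thus (S1) holds with $i=1$, and $\iota$ is a pre-triangle equivalence.

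Combining $\iota$ and $\varphi$ yields the pre-triangle quasi-equivalence between $A\mbox{-\underline{mod}}$ and $B\mbox{-\underline{mod}}$, and Proposition \ref{prop:key} delivers the singular equivalence. The main obstacle is verifying (S1) for $\iota$ in full generality, i.e., checking that every stable morphism between direct sums of the $A\alpha$'s arises, up to applying $\Omega_A$ once, from a morphism in the rigid combinatorial category $\mathcal{X}$. This verification hinges on the explicit commutative diagram in Remark \ref{rem:1}, which is precisely what forces the syzygies of the morphisms $\pi_{\alpha,\alpha'}$ to align with the rigid Hom structure of $\mathcal{X}$ and hence of $B\mbox{-\underline{mod}}$.
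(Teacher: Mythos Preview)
Your proposal is correct and follows essentially the same approach as the paper: your combinatorially defined category $\mathcal{X}$ is nothing other than the full subcategory $B\mbox{-\underline{ssmod}}\subseteq B\mbox{-\underline{mod}}$ of semisimple $B$-modules (after discarding the projective simples, which are zero objects there), and your functors $\iota$ and $\varphi$ coincide with the paper's $F\colon B\mbox{-\underline{ssmod}}\to A\mbox{-\underline{mod}}$ and the inclusion ${\rm inc}\colon B\mbox{-\underline{ssmod}}\to B\mbox{-\underline{mod}}$, respectively. The verification of (S1)--(S3) for $\iota$ via Lemma~\ref{lem:sthom}(3)--(4), Remark~\ref{rem:1}, and Lemma~\ref{lem:qmono}(3), together with the use of Corollary~\ref{cor:S3} for the fully faithful $\varphi$, matches the paper's argument in Lemma~\ref{lem:key} and the proof of Theorem~\ref{thm:main} line for line.
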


For an arrow $\alpha$ in $Q$, we denote by $S_\alpha$ and $P_\alpha$ the simple $B$-module and the indecomposable projective $B$-module corresponding to the vertex $\alpha$, respectively. We may identify $P_\alpha$ with $Be_\alpha$,  where $e_\alpha$ denotes the trivial path in $\mathcal{R}_A$ at $\alpha$. Hence, the $B$-module $P_\alpha$ has a $k$-basis $\{e_\alpha, [\beta\alpha]\; |\; \beta\alpha\in \mathbf{F}\}$.  We observe the following short exact sequence of $B$-modules
 \begin{align}\label{equ:4}
 0\longrightarrow \bigoplus_{\{\beta\in Q_1\; |\; \beta\alpha\in \mathbf{F}\}} S_\beta \stackrel{i_\alpha}\longrightarrow P_\alpha \longrightarrow S_\alpha\longrightarrow 0,
 \end{align}
 where $i_\alpha$ identifies $S_\beta$ with the $B$-submodule $k[\beta\alpha]$.

 We denote by $B\mbox{-\underline{ssmod}}$ the full subcategory of $B\mbox{-\underline{mod}}$ consisting of semisimple $B$-modules. We observe that for any $B$-module $M$, the syzygy $\Omega_B(M)$ is semisimple; compare \cite[Lemma 2.1]{Chen2012}. Moreover, any homomorphism $f\colon X\rightarrow Y$ between semisimple modules \emph{splits}, that is, it is isomorphic to a homomorphism of the form $\begin{pmatrix}0 & {\rm Id}_Z\\
 0& 0\end{pmatrix}\colon K\oplus Z\rightarrow  C\oplus Z$ for some $B$-modules $K$, $C$ and $Z$. We infer that $B\mbox{-\underline{ssmod}}\subseteq B\mbox{-\underline{mod}}$ is a left triangulated subcategory. Moreover, all left triangles inside  $B\mbox{-\underline{ssmod}}$ are direct sums of trivial ones.

 There is a unique $k$-linear functor $F\colon B\mbox{-\underline{ssmod}}\rightarrow A\mbox{-\underline{mod}}$ sending $S_\alpha$ to $A\alpha$ for each arrow  $\alpha$ in $Q$. Here, for the well-definedness of $F$, we use the following fact, which can be obtained by comparing (\ref{equ:3}) and (\ref{equ:4}): the simple $B$-module $S_\alpha$ is projective if and only if so is the $A$-module $A\alpha$.

 We have the following key observation.

 \begin{lem}\label{lem:key}
 The functor $F\colon B\mbox{-\underline{\rm ssmod}}\rightarrow A\mbox{-\underline{\rm mod}}$ is a pre-triangle equivalence.
 \end{lem}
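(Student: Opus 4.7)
The plan is to verify conditions (S1), (S2), (S3) of Proposition~\ref{prop:stab} for $(F,\delta)$, together with confirming that $F$ is a triangle functor. For the looped structure, comparing the short exact sequences (\ref{equ:3}) and (\ref{equ:4}) shows that on each simple $S_\alpha$ both $F\Omega_B(S_\alpha)$ and $\Omega_A F(S_\alpha)$ are canonically identified with $\bigoplus_{\beta\alpha\in\mathbf{F}} A\beta$; extending by additivity yields a natural isomorphism $\delta\colon F\Omega_B \to \Omega_A F$ which may be taken to be the identity on simples, so that $(F,\delta)$ is strictly looped there. Since morphisms between semisimple $B$-modules split, the left triangles in $B\mbox{-\underline{ssmod}}$ are direct sums of trivial ones and are automatically sent to left triangles in $A\mbox{-\underline{mod}}$. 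Note that $F$ is \emph{not} fully faithful (since $\underline{\rm Hom}_A(A\alpha, A\alpha')$ can be nonzero for $\alpha \neq \alpha'$ by Lemma~\ref{lem:sthom}(4)), so Corollary~\ref{cor:S3} does not apply and each of (S1)--(S3) must be verified directly.

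Condition (S3) is immediate from Lemma~\ref{lem:qmono}(3): for any $A$-module $M$, one has $\Omega_A^2(M) \simeq \bigoplus_\alpha (A\alpha)^{n_\alpha} = F\bigl(\bigoplus_\alpha S_\alpha^{n_\alpha}\bigr)$, so $i = 2$ suffices. For (S2), by additivity it suffices to consider morphisms $f, f' \colon S_\alpha \to S_{\alpha'}$ between simples. If $\alpha \neq \alpha'$ then ${\rm Hom}_B(S_\alpha, S_{\alpha'}) = 0$; if $\alpha = \alpha'$ and $S_\alpha$ is projective (equivalently, by comparing (\ref{equ:3}) and (\ref{equ:4}), $A\alpha$ is projective), both endomorphism spaces vanish in the stable categories; otherwise $\underline{\rm End}_B(S_\alpha) = k$ and, by Lemma~\ref{lem:sthom}(4) with $\alpha' = \alpha$ (noting $\alpha \in N(\alpha)$ trivially), $\underline{\rm End}_A(A\alpha) = k\bar{\pi}_{\alpha,\alpha} = k\cdot\overline{{\rm id}}$, so $F$ induces an isomorphism between them. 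Thus (S2) holds with $i = 0$.

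The main content lies in (S1). By additivity we reduce to a morphism $g\colon F(S_\alpha) = A\alpha \to A\alpha' = F(S_{\alpha'})$ in $A\mbox{-\underline{mod}}$. If $\alpha' \notin N(\alpha)$, Lemma~\ref{lem:sthom}(3) gives $g = 0$ and one takes $f = 0$, $i = 0$. Otherwise, Lemma~\ref{lem:sthom}(4) yields $g = c\,\bar{\pi}_{\alpha,\alpha'}$ for some scalar $c \in k$. The crucial step is to invoke Remark~\ref{rem:1}: applying $\Omega_A$ to $\pi_{\alpha,\alpha'}$ produces the natural inclusion $\bigoplus_{\beta\alpha \in \mathbf{F}} A\beta \hookrightarrow \bigoplus_{\beta\alpha' \in \mathbf{F}} A\beta$ of syzygies, arising from the containment $\{\beta : \beta\alpha \in \mathbf{F}\} \subseteq \{\beta : \beta\alpha' \in \mathbf{F}\}$ implied by $\alpha' \in N(\alpha)$. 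This inclusion is precisely $F(\iota)$, where $\iota\colon \Omega_B(S_\alpha) \to \Omega_B(S_{\alpha'})$ is the analogous inclusion of semisimple $B$-modules viewed as a morphism in $B\mbox{-\underline{ssmod}}$. Consequently (S1) holds uniformly with $i = 1$ and $f = c\iota$.

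The main obstacle is (S1): lifting an arbitrary morphism in $A\mbox{-\underline{mod}}$ back to $B\mbox{-\underline{ssmod}}$ after applying a syzygy requires both the explicit description of the stable Hom-spaces from Lemma~\ref{lem:sthom} and the compatibility between $\Omega_A$ and $\pi_{\alpha,\alpha'}$ furnished by Remark~\ref{rem:1}. The remaining effort is routine bookkeeping: handling direct summands additively and checking that the identifications $F\Omega_B \cong \Omega_A F$ commute with the maps $\iota$ constructed above.
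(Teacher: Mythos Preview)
Your proposal is correct and follows essentially the same approach as the paper's proof: both establish the looped structure by comparing (\ref{equ:3}) and (\ref{equ:4}), deduce the triangle-functor property from the fact that all left triangles in $B\mbox{-\underline{ssmod}}$ split, obtain (S3) from Lemma~\ref{lem:qmono}(3), and handle (S1) via Lemma~\ref{lem:sthom} and Remark~\ref{rem:1} to lift $\bar\pi_{\alpha,\alpha'}$ after one application of the syzygy. The only difference is that for (S2) the paper simply asserts that $F$ is faithful, whereas you spell out the verification on simples; this is a minor expository elaboration rather than a different argument.
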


 \begin{proof}
 Let $\alpha$ be an arrow in $Q$. We observe that (\ref{equ:3}) and (\ref{equ:4}) compute the syzygies modules $\Omega_A(A\alpha)$ and $\Omega_B(S_\alpha)$, respectively. It follows that the functor $F$ commutes with the syzygy functors. In other words, there is a natural isomorphism $\delta\colon F\Omega_A\rightarrow \Omega_BF$ such that $(F, \delta)$ is a looped functor. Since all morphisms in  $B\mbox{-\underline{\rm ssmod}}$ split, each left triangle inside is a direct sum of trivial ones. It follows that $F$ respects left triangles, that is, $(F, \delta)$ is a triangle functor.

 We verify the conditions (S1)-(S3) in Proposition \ref{prop:stab}. Then we are done. Since the functor $F$ is faithful, (S2) follows. The condition (S3) follows from Lemma  \ref{lem:qmono}(3).

 For (S1), we take a morphism $g\colon FX\rightarrow FY$ in $A\mbox{-\underline{mod}}$. Without loss of generality, we assume that both $X$ and $Y$ are indecomposable, in which case both are simple $B$-modules. We assume that $X=S_\alpha$ and $Y=S_{\alpha'}$. We assume that $g$ is nonzero, in particular, $FX=A\alpha$ is non-projective, or equivalently, the set $\{\beta\in Q_1\; |\; \beta\alpha\in \mathbf{F}\}$ is nonempty. Observe that $FY=A\alpha'$. We apply Lemma \ref{lem:sthom}(3) to infer that $\alpha'\in N(\alpha)$. Write $\pi=\pi_{\alpha, \alpha'}$. By Lemma \ref{lem:sthom}(4), we may assume that $g=\bar{\pi}$.

 The commutative diagram in Remark \ref{rem:1} implies that $\Omega_A(g)$ equals the inclusion morphism $\bigoplus_{\{\beta\in Q_1\; |\; \beta\alpha \in \mathbf{F}\}} A\beta\rightarrow \bigoplus_{\{\beta\in Q_1\; |\; \beta\alpha' \in \mathbf{F}\}} A\beta$. Take $f$ to be the corresponding inclusion morphism $\Omega_B(S_\alpha)=\bigoplus_{\{\beta\in Q_1\; |\; \beta\alpha \in \mathbf{F}\}} S_\beta\rightarrow \Omega_B(S_{\alpha'})=\bigoplus_{\{\beta\in Q_1\; |\; \beta\alpha' \in \mathbf{F}\}} S_\beta$ in $B\mbox{-\underline{ssmod}}$. Then we identify $F(f)$ with $\Omega_A(g)$, more precisely,  we have $F(f)=\delta_Y\circ \Omega_A(g)\circ (\delta_X)^{-1}$. This proves the condition (S1).
 \end{proof}

 We now prove Theorem \ref{thm:main}.
 \vskip 5pt

 \noindent \emph{Proof of Theorem \ref{thm:main}.}\quad Consider the inclusion functor ${\rm inc}\colon B\mbox{-\underline{ssmod}}\rightarrow B\mbox{-\underline{mod}}$. As mentioned above, this is a triangle functor. Recall that the syzygy of any $B$-module is semisimple, that is, it lies in $B\mbox{-\underline{ssmod}}$. Then the inclusion functor is a pre-triangle equivalence by Corollary \ref{cor:S3}. Recall the pre-triangle equivalence $F\colon B\mbox{-\underline{ssmod}}\rightarrow A\mbox{-\underline{mod}}$ in Lemma \ref{lem:key}. Then we have the required pre-triangle quasi-equivalence between $A\mbox{-\underline{mod}}$ and $B\mbox{-\underline{mod}}$.

  The last statement follows from Proposition \ref{prop:key}. We mention that by the explicit construction of the functor $F$, the resulting triangle equivalence $\mathbf{D}_{\rm sg}(A)\rightarrow \mathbf{D}_{\rm sg}(B)$ sends $A\alpha$ to $S_\alpha$ for each arrow $\alpha$ in $Q$.\hfill $\square$

 \section{Consequences and examples}

 In this section, we draw some consequences of Theorem \ref{thm:main} and describe some examples. For the Leavitt path algebra of a finite quiver, we refer to \cite{AA05,Sm,CYang}

 We first make some preparation by recalling some known results on the singularity category of an algebra with radical square zero. For a finite quiver $Q$, we denote by $kQ/J^2$ the corresponding algebra with radical square zero. Recall that a vertex in $Q$ is a sink if there is no arrow starting at it. We denote by $Q^0$ the quiver without sinks, that is obtained from $Q$ by repeatedly removing sinks.

 We denote by $L(Q)$ the \emph{Leavitt path algebra} of $Q$ with coefficients in $k$, which has a natural  $\mathbb{Z}$-grading. We denote by $L(Q)\mbox{-grproj}$ the category of finitely generated $\mathbb{Z}$-graded left $L(Q)$-modules, and by $(-1)\colon L(Q)\mbox{-grproj}\rightarrow L(Q)\mbox{-grproj}$ the degree-shift functor by degree $-1$.

For $n\geq 1$, we denote by $Z_n$ the \emph{basic $n$-cycle}, which is a connected quiver consisting of $n$ vertices and $n$ arrows which form an oriented cycle.  Then the algebra $kZ_n/J^2$ is selfinjective. In particular, the stable module category $kZ_n/J^2\mbox{-\underline{mod}}$ is triangle equivalent to $\mathbf{D}_{\rm sg}(kZ_n/J^2)$.

An abelian category  $\mathcal{A}$ is semisimple if any short exact sequence splits. For example, if the quiver $Q$ has no sinks, the category $L(Q)\mbox{-grproj}$ is a semisimple abelian category; see \cite[Lemma 4.1]{CYang}.

For a semisimple abelian category $\mathcal{A}$ and an auto-equivalence $\Sigma$ on $\mathcal{A}$, there is a unique triangulated structure on $\mathcal{A}$ with $\Sigma$ the translation functor. Indeed, all triangles are direct sums of trivial ones. The resulting triangulated category is denoted by $(\mathcal{A}, \Sigma)$; see \cite[Lemma 3.4]{Chen2011}.  As an example, we will consider the triangulated category $(L(Q)\mbox{-grproj}, (-1))$ for a quiver $Q$ without sinks.

\begin{exm}\label{exm:1}
{\rm  Let $k^n=k\times k\times \cdots \times k$ be the product algebra of $n$ copies of $k$. Consider the automorphism $\sigma\colon k^n\rightarrow k^n$ sending $(a_1, a_2, \cdots, a_n)$ to $(a_2, \cdots, a_n, a_1)$, which induces an automorphism $\sigma^*\colon k^n\mbox{-mod}\rightarrow k^n\mbox{-mod}$ by twisting  the $k^n$-action on modules. We observe that there are  triangle equivalences
$$(k^n\mbox{-mod}, \sigma^*)\stackrel{\sim}\longrightarrow  kZ_n/J^2\mbox{-\underline{mod}}\stackrel{\sim} \longrightarrow  (L(Z_n)\mbox{-grproj}, (-1)).$$
The first equivalence is well known and the second one is a special case of \cite[Theorem 6.1]{CYang}. We will denote all these triangulated categories by $\mathcal{T}_n$. }
\end{exm}

Let $Q$ be a finite quiver. We call a connected component $\mathcal{C}$ of $Q$ \emph{perfect} (resp. \emph{acyclic}) if it is a basic cycle (resp. it has no oriented cycles). A connected component is \emph{defect} if it is neither perfect nor acyclic. Then we have a disjoint union
$$Q=Q^{\rm perf}\cup Q^{\rm ac}\cup Q^{\rm def}$$
where $Q^{\rm perf}$ (resp. $Q^{\rm ac}$, $Q^{\rm def}$) is the union of all the perfect (resp. acyclic, defect) components in $Q$. Denote by $B=kQ/J^2$. Then we have a decomposition $B=B^{\rm perf}\times B^{\rm ac}\times B^{\rm def}$ of algebras.

We summarise the known results on the singularity category and the Gorenstein defect category of an algebra with radical square zero.

\begin{lem}\label{lem:radical2}
Keep the notation as above. Then the following statements hold.
\begin{enumerate}
\item There is a triangle equivalence $\mathbf{D}_{\rm sg}(B) \simeq B^{\rm perf}\mbox{-\underline{\rm mod}}\times \mathbf{D}_{\rm sg}(B^{\rm def})$.
\item There is a triangle equivalence $B\mbox{-\underline{\rm Gproj}} \simeq B^{\rm perf}\mbox{-\underline{\rm mod}}$, which is triangle equivalent to a product of categories $\mathcal{T}_n$.
    \item There is a triangle equivalence $\mathbf{D}_{\rm def}(B) \simeq \mathbf{D}_{\rm sg}(B^{\rm def})$, which is triangle equivalent to $(L((Q^{\rm def})^0)\mbox{-{\rm grproj}}, (-1))$.
\end{enumerate}
\end{lem}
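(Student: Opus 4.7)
The plan is to exploit the block decomposition $B = B^{\rm perf}\times B^{\rm ac}\times B^{\rm def}$, which induces compatible product decompositions
$$
\mathbf{D}_{\rm sg}(B) \simeq \mathbf{D}_{\rm sg}(B^{\rm perf}) \times \mathbf{D}_{\rm sg}(B^{\rm ac}) \times \mathbf{D}_{\rm sg}(B^{\rm def}),
$$
and analogously for $B\mbox{-\underline{\rm mod}}$ and $B\mbox{-\underline{\rm Gproj}}$. All three statements of the lemma then reduce to evaluating each invariant on the three factors and reassembling.

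For the acyclic part, $B^{\rm ac} = kQ^{\rm ac}/J^2$ has finite global dimension (a radical square zero algebra is of finite global dimension if and only if its defining quiver has no oriented cycles), so both $\mathbf{D}_{\rm sg}(B^{\rm ac})$ and $B^{\rm ac}\mbox{-\underline{\rm Gproj}}$ vanish. For the perfect part, $B^{\rm perf}$ is a finite product of the selfinjective algebras $kZ_n/J^2$; thus every $B^{\rm perf}$-module is Gorenstein-projective, giving $B^{\rm perf}\mbox{-\underline{\rm Gproj}} = B^{\rm perf}\mbox{-\underline{\rm mod}}$, and the identification $\mathbf{D}_{\rm sg}(B^{\rm perf}) \simeq B^{\rm perf}\mbox{-\underline{\rm mod}}$ is the selfinjective case recalled before Lemma \ref{lem:2} via \cite[Theorem 2.1]{Ric}. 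The further identification with a product of categories $\mathcal{T}_n$ is exactly Example \ref{exm:1}. For the defect part, I would invoke the classification of Gorenstein-projective modules over radical square zero algebras from \cite{Chen2011} to conclude that $B^{\rm def}\mbox{-\underline{\rm Gproj}} = 0$, and cite \cite[Theorem 6.1]{CYang} (see also \cite{Sm}) to obtain the description $\mathbf{D}_{\rm sg}(B^{\rm def}) \simeq (L((Q^{\rm def})^0)\mbox{-{\rm grproj}}, (-1))$ after passing to the sink-free core.

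Assembling these factorwise computations gives (1) and (2) immediately. For (3), I would use the definition $\mathbf{D}_{\rm def}(B) = \mathbf{D}_{\rm sg}(B)/{\rm Im}\, G_B$: under the decomposition in (1), the fully faithful functor $G_B$ identifies its essential image with the factor $B^{\rm perf}\mbox{-\underline{\rm mod}} \times 0 \times 0$ by combining (2) with the vanishings $B^{\rm ac}\mbox{-\underline{\rm Gproj}} = 0 = B^{\rm def}\mbox{-\underline{\rm Gproj}}$; the Verdier quotient then collapses this factor and leaves $\mathbf{D}_{\rm sg}(B^{\rm def})$.

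The only genuinely non-trivial inputs are in the defect case, namely the vanishing of non-projective Gorenstein-projectives from \cite{Chen2011} and the Leavitt-path-algebra description of the singularity category from \cite{Sm, CYang}. I expect the main obstacle to be nothing more than bookkeeping around the block decomposition, with those two external results used as black boxes.
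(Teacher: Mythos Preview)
Your proposal is correct and matches the paper's proof almost verbatim: block decomposition, vanishing of $\mathbf{D}_{\rm sg}(B^{\rm ac})$ by finite global dimension, selfinjectivity of $B^{\rm perf}$ giving $\mathbf{D}_{\rm sg}(B^{\rm perf})\simeq B^{\rm perf}\mbox{-\underline{mod}}$, Example~\ref{exm:1} for the $\mathcal{T}_n$ description, CM-freeness of $B^{\rm def}$, and the Leavitt path algebra description via \cite{CYang} after removing sinks. The only correction is bibliographic: the result that a non-selfinjective radical-square-zero algebra has no non-projective Gorenstein-projective modules is \cite[Theorem 1.1]{Chen2012}, not \cite{Chen2011}; and for passing to the sink-free core in (3) the paper also invokes \cite[Proposition 4.2]{Chen2011}.
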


\begin{proof}
We observe that the algebra $B^{\rm perf}$ is selfinjective and that $B^{\rm ac}$ has  finite global dimension. Then (1) is a consequence of the decomposition $\mathbf{D}_{\rm sg}(B)=\mathbf{D}_{\rm sg}(B^{\rm perf})\times \mathbf{D}_{\rm sg}(B^{\rm ac})\times \mathbf{D}_{\rm sg}(B^{\rm def})$ of categories.

For (2), we note that any $B^{\rm perf}$-module is Gorenstein-projective and that a Gorenstein-projective $B^{\rm ac}$-module is necessarily projective. By \cite[Theorem 1.1]{Chen2012} any Gorenstein-projective $B^{\rm def}$-module is projective. Then (2) follows by a similar decomposition of $B\mbox{-\underline{\rm Gproj}}$. The last statement follows from Example \ref{exm:1}, since $B^{\rm perf}$ is isomorphic to a product of algebras of the form $kZ_n/J^2$.

By (1) and (2), the functor $G_B\colon B\mbox{-\underline{\rm Gproj}}\rightarrow \mathbf{D}_{\rm sg}(B)$ is identified with the inclusion. The required triangle equivalence follows immediately. The last sentence follows by combining \cite[Proposition 4.2]{Chen2011} and  \cite[Theorem 6.1]{CYang}; compare \cite[Theorem B]{Chen2011} and \cite[Theorem 5.9]{Sm}.
 \end{proof}

In what follows, let $A=kQ/I$ be a quadratic monomial algebra with $\mathcal{R}_A$ its relation quiver. We denote by $\{\mathcal{C}_1, \mathcal{C}_2, \cdots, \mathcal{C}_m\}$ the set of all  the perfect components in $\mathcal{R}_A$, and by $d_i$ the number of vertices in the basic cycle $\mathcal{C}_i$.

Let $B=k\mathcal{R}_A/J^2$ be the algebra with radical square zero defined by $\mathcal{R}_A$. We consider the triangle equivalence $\Phi\colon \mathbf{D}_{\rm sg}(A)\rightarrow \mathbf{D}_{\rm sg}(B)$ obtained in Theorem \ref{thm:main}. We identify the fully faithful functors $G_A$ and $G_B$ as inclusions.

The following result describes the singularity category and the Gorenstein defect category of a quadratic monomial algebra. We mention that the equivalence in Proposition \ref{prop:final}(2) is due to \cite[Theorem 5.7]{CSZ}, which is obtained by a completely different method.

\begin{prop}\label{prop:final}
The triangle equivalence $\Phi\colon \mathbf{D}_{\rm sg}(A)\rightarrow \mathbf{D}_{\rm sg}(B)$ restricts to a triangle equivalence $A\mbox{-\underline{\rm Gproj}}\stackrel{\sim}\longrightarrow B\mbox{-\underline{\rm Gproj}}$, and thus induces a triangle equivalence $\mathbf{D}_{\rm def}(A)\stackrel{\sim}\longrightarrow \mathbf{D}_{\rm def}(B)$.

Consequently, we have the following triangle equivalences:
\begin{enumerate}
\item $\mathbf{D}_{\rm sg}(A)\stackrel{\sim}\longrightarrow A\mbox{-\underline{\rm Gproj}} \times \mathbf{D}_{\rm def}(A)$;
\item $A\mbox{-\underline{\rm Gproj}}\stackrel{\sim}\longrightarrow B^{\rm perf}\mbox{-\underline{\rm mod}}\stackrel{\sim}\longrightarrow \mathcal{T}_{d_1}\times \mathcal{T}_{d_2}\times \cdots \times \mathcal{T}_{d_m}$;
    \item $\mathbf{D}_{\rm def}(A) \stackrel{\sim}\longrightarrow \mathbf{D}_{\rm sg}(B^{\rm def})\stackrel{\sim}\longrightarrow (L(Q')\mbox{-{\rm grproj}}, (-1))$ with $Q'=(\mathcal{R}_A^{\rm def})^0$.
\end{enumerate}
\end{prop}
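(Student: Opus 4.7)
The plan is to establish, in order, (i) that the triangle equivalence $\Phi\colon \mathbf{D}_{\rm sg}(A)\to \mathbf{D}_{\rm sg}(B)$ of Theorem \ref{thm:main} restricts to a triangle equivalence $A\mbox{-\underline{\rm Gproj}}\to B\mbox{-\underline{\rm Gproj}}$, and then (ii) the three numbered consequences formally from (i), Lemma \ref{lem:radical2}, Example \ref{exm:1}, and the definition of $\mathbf{D}_{\rm def}$ as a Verdier quotient.

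For (i), I will use the explicit identification $\Phi(A\alpha)\cong S_\alpha$ for each arrow $\alpha$ of $Q$, recorded at the end of the proof of Theorem \ref{thm:main}. Combined with the fact that every object of $\mathbf{D}_{\rm sg}(A)$ is, up to a shift, a direct sum of modules of the form $A\alpha$ (Lemma \ref{lem:qmono}(3)) and the parallel observation that $\Omega_B$ takes values in semisimple $B$-modules (so every object of $\mathbf{D}_{\rm sg}(B)$ is, up to a shift, a direct sum of $S_\alpha$'s), the restriction claim reduces to the single assertion that $A\alpha$ is Gorenstein-projective as an $A$-module if and only if $\alpha$ lies in a perfect component of $\mathcal{R}_A$. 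The parallel fact on the $B$-side is Lemma \ref{lem:radical2}(2); granted both, the triangle equivalence $\Phi$ must match the respective generating classes and hence the triangulated subcategories they generate.

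The ``if'' direction is by explicit construction. If $\alpha=\alpha_1$ lies on the basic cycle $\alpha_1\to\alpha_2\to\cdots\to\alpha_n\to\alpha_1$ of a perfect component of $\mathcal{R}_A$, then by Lemma \ref{lem:qmono}(2) the uniqueness of the outgoing arrow at each $\alpha_i$ gives $\Omega_A(A\alpha_i)=A\alpha_{i+1}$, and splicing the associated short exact sequences yields a periodic doubly infinite complex of projective $A$-modules whose cocycles are the $A\alpha_i$. Using Lemma \ref{lem:sthom}(1) and the uniqueness of the incoming arrow (which forces $N(\alpha_{i-1})=\{\alpha_i\}$), one checks that ${\rm Hom}_A(-,A)$ sends this complex to an acyclic one, so each $A\alpha_i$ is Gorenstein-projective. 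The ``only if'' direction splits by the component type of $\alpha$: if $\alpha$ is in an acyclic component, iterating Lemma \ref{lem:qmono}(2) forces $\Omega_A^n(A\alpha)=0$ for $n$ large and hence $A\alpha$ has finite projective dimension; if $\alpha$ is in a defect component, I will locate a branching vertex of $\mathcal{R}_A^{\rm def}$ in the same component and use Lemma \ref{lem:sthom} to produce a non-vanishing ${\rm Ext}$ from $A\alpha$ to $A$ in some degree, ruling out total acyclicity.

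Given (i), the induced triangle equivalence $\mathbf{D}_{\rm def}(A)\to\mathbf{D}_{\rm def}(B)$ descends automatically by the universal property of Verdier quotients. Then item (1) follows by transporting the block decomposition $\mathbf{D}_{\rm sg}(B)\simeq B^{\rm perf}\mbox{-\underline{\rm mod}}\times \mathbf{D}_{\rm sg}(B^{\rm def})$ of Lemma \ref{lem:radical2}(1) through $\Phi$ and relabelling the two factors as $A\mbox{-\underline{\rm Gproj}}$ and $\mathbf{D}_{\rm def}(A)$ via the restricted equivalence and the induced quotient equivalence; item (2) chains the restricted equivalence with Lemma \ref{lem:radical2}(2) and Example \ref{exm:1}; item (3) chains the induced equivalence with Lemma \ref{lem:radical2}(3), noting that applying $(-)^0$ to $\mathcal{R}_A^{\rm def}$ removes exactly the acyclic tails that get killed by the singular quotient. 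The part I expect to be the main obstacle is the defect half of the characterization in the previous paragraph: the branching structure of $\mathcal{R}_A^{\rm def}$ admits no canonical normal form, and the dual-complex obstruction must be read off componentwise, in the spirit of \cite[Theorem 1.1]{Chen2012}; arguing it directly, rather than transporting through $\Phi$ (which would be circular at this stage), is the one piece of the proof that is not a formal manipulation of the equivalences already in place.
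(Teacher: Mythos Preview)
Your overall architecture matches the paper's proof exactly: both use the explicit identification $\Phi(A\alpha)\cong S_\alpha$, the $B$-side input from Lemma~\ref{lem:radical2}, and then read off items (1)--(3) formally. Your use of Lemma~\ref{lem:qmono}(3) to reduce ${\rm Im}\,G_A$ to a question about the individual modules $A\alpha$ is in fact a slightly cleaner reduction than the paper's, which instead invokes the full classification of indecomposable non-projective Gorenstein-projective $A$-modules.

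The one substantive divergence is precisely the step you flag as the main obstacle. The paper does not prove the characterization ``$A\alpha$ is non-projective Gorenstein-projective if and only if $\alpha$ lies in a perfect component of $\mathcal{R}_A$'' at all; it imports it wholesale from \cite[Lemma~5.4(1)]{CSZ}. Your ``if'' direction and the acyclic half of ``only if'' are fine, but the defect half is only a gesture. The phrase ``locate a branching vertex and produce a non-vanishing ${\rm Ext}$'' hides real work: a vertex of out-degree $\geq 2$ in $\mathcal{R}_A$ and one of in-degree $\geq 2$ produce obstructions of different shapes (the latter shows up as $|N(\beta)|>1$ for some $\beta$, forcing $Z(\beta)\supsetneq \beta A$ and hence ${\rm Ext}^1(A\gamma,A)\neq 0$ for suitable $\gamma$), and you must also argue that the branching is actually reachable from $\alpha$ along directed paths in $\mathcal{R}_A$, which is not automatic from mere connectedness of the component. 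None of this is insurmountable, but it is exactly the content of the cited lemma in \cite{CSZ}; unless you intend to reproduce that argument in full, you should cite it as the paper does and move on.
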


\begin{proof}
Recall from the proof of Theorem \ref{thm:main} that $\Phi(A\alpha)=S_\alpha$ for each arrow $\alpha$ in $Q$. By \cite[Lemma 5.4(1)]{CSZ} the $A$-module $A\alpha$ is non-projective Gorenstein-projective if and only if $\alpha$, as a vertex, lies in a perfect component of $\mathcal{R}_A$. Moreover, any indecomposable non-projective Gorenstein-projective $A$-module arises in this way. On the other hand, any indecomposable non-projective Gorenstein-projective $B$-module is of the form $S_\alpha$ with $\alpha$ in $\mathcal{R}_A^{\rm perf}$; see Lemma \ref{lem:radical2}(2). It follows that the equivalence $\Phi$ restricts to the equivalence $A\mbox{-\underline{\rm Gproj}}\stackrel{\sim}\longrightarrow B\mbox{-\underline{\rm Gproj}}$.

The three triangle equivalences follow immediately from the equivalences in Lemma \ref{lem:radical2}.
\end{proof}

We end the paper with  examples on Proposition \ref{prop:final}.

\begin{exm}
{\rm Let $A$ be a quadratic monomial algebra which is Gorenstein. By \cite[Proposition 5.5(1)]{CSZ} this is equivalent to the condition that the relation quiver $\mathcal{R}_A$ has no defect components. For example, a gentle algebra is such an example. Note that $\mathbf{D}_{\rm def}(A)$ is trivial.  Then we obtain a triangle equivalence
$$\mathbf{D}_{\rm sg}(A)\stackrel{\sim}\longrightarrow \mathcal{T}_{d_1}\times \mathcal{T}_{d_2}\times \cdots \times \mathcal{T}_{d_m},$$
 where $d_i$'s denote the sizes of the perfect components of $\mathcal{R}_A$. This result extends \cite[Theorem 2.5(b)]{Kal}; see also  \cite{CGL}.}
\end{exm}

\begin{exm}
{\rm Let $A=k\langle x, y\rangle/I$ be the quotient algebra of the free algebra $k\langle x, y\rangle$ by the ideal $I=(x^2, y^2, yx)$. Then the relation quiver $\mathcal{R}_A$ is as follows.
\[\xymatrix{
 x \ar@(dl,ul)^{[x^2]} \ar[rr]^-{[yx]}&& y \ar@(dr,ur)_{[y^2]}
}\]
The relation quiver has no perfect components. Then we have triangle equivalences $\mathbf{D}_{\rm sg}(A)\simeq \mathbf{D}_{\rm def}(A)\simeq (L(\mathcal{R}_A)\mbox{-{\rm grproj}}, (-1))$.}
\end{exm}

\begin{exm}
{\rm Consider the following quiver $Q$ and the algebra $A=kQ/I$ with $I=(\beta\alpha, \alpha\beta, \delta\gamma, \gamma\delta, \delta\xi)$.
\[\xymatrix{
1 \ar@/^/[r]^-\alpha & 2 \ar@/^/[l]^-{\beta} \ar@/^/[r]^-\gamma & 3 \ar@/^/[l]^-{\delta} & 4\ar[l]_{\xi}
}\]
Its relation quiver $\mathcal{R}_A$ is as follows.
\[\xymatrix{ \alpha \ar@/^/[r]^-{[\beta\alpha]} & \beta \ar@/^/[l]^-{[\alpha\beta]} &  \gamma \ar@/^/[r]^-{[\delta\gamma]} & \delta \ar@/^/[l]^-{[\gamma\delta]}  & \xi \ar[l]_{[\delta\xi]}}\]
There are one perfect component and one defect component; moreover, we observe  $(\mathcal{R}_A^{\rm def})^0=Z_2$. Then we have triangle equivalences $A\mbox{-\underline{\rm Gproj}}\simeq \mathcal{T}_2$ and $\mathbf{D}_{\rm def}(A)\simeq (L(Z_2)\mbox{-grproj}, (-1))$, which is equivalent to $\mathcal{T}_2$; see Example \ref{exm:1}. Therefore, we have a triangle equivalence $\mathbf{D}_{\rm sg}(A) \simeq \mathcal{T}_2\times \mathcal{T}_2$.}
\end{exm}

\vskip 10pt

\noindent {\bf Acknowledgements} \; The author thanks Dawei Shen and Dong Yang for helpful discussions. The work is supported by the National Natural Science Foundation of China (No.11201446), NECT-12-0507, and the Fundamental Research Funds for the Central Universities.

\bibliography{}

\begin{thebibliography}{999}

\bibitem{AA05} {\sc G.  Abrams and G. Aranda Pino}, {\em The Leavitt path algebra of a graph,}
 J. Algebra {\bf 293} (2) (2005), 319--334.



\bibitem{ABr} {\sc M. Auslander,  and M. Bridger}, {\em Stable module
theory}, Mem. Amer. Math. Soc. {\bf 94}, 1969.



\bibitem{ARS}{\sc M. Auslander, I. Reiten,   and S.O. Smal{\o},}
Representation Theory of Artin Algebras, Cambridge Studies in Adv.
Math. {\bf 36}, Cambridge Univ. Press, Cambridge, 1995.




\bibitem{Bel2000} {\sc A. Beligiannis,} {\em The homological theory of contravariantly finite
subcategories: Auslander-Buchweitz contexts, Gorenstein categories and (co-)stabilization,}
 Comm. Algebra {\bf 28} (10) (2000), 4547--4596.





\bibitem{BM} {\sc A. Beligiannis, and M. Marmaridis}, {\em Left triangulated categories arising from contmvariantly finite subcategories}, Comm. Algebra {\bf 22} (12) (1994), 5021--5036.



\bibitem{BJO} {\sc P.A. Bergh, D.A. Jorgensen, and S. Oppermann}, {\em
The Gorenstein defect category}, arXiv: arXiv:1202.2876v2.


\bibitem{Buc} {\sc R.O. Buchweitz,} Maximal Cohen-Macaulay Modules
and Tate Cohomology over Gorenstein Rings, Unpublished Manuscript,
1987. Availble at: http://hdl.handle.net/1807/16682.


\bibitem{CGL} {\sc X. Chen, S. Geng, and M. Lu}, {\em The singularity categories of the cluster-tilted algebras of Dynkin type}, Algebr. Represent. Theor., to appear,  arXiv:1404.4742v2.



\bibitem{Ch09} {\sc X.W. Chen}, {\em Singularity categories, Schur functors and
triangular matrix rings}, Algebr. Represent. Theor. {\bf 12} (2009),
181--191.




\bibitem{Chen2011} {\sc X.W. Chen}, {\em The singularity category of an algebra with radical square zero}, Doc. Math. {\bf 16} (2011), 921--936.


\bibitem{Chen2012} {\sc X.W. Chen}, {\em  Algebras with radical square zero are either self-injective or CM-free,}
Proc. Amer. Math. Soc. {\bf 140} (1) (2012), 93--98.




\bibitem{CSZ} {\sc X.W. Chen, D. Shen, and G. Zhou}, {\em The Gorenstein-projective modules over a monomial algebra}, arXiv:1501.02978.



\bibitem{CYang} {\sc X.W. Chen,  and D. Yang}, {\em Homotopy categories, Leavitt path algebras and Gorenstein projective modules}, Inter. Math. Res. Notices, doi:10.1093/imrn/rnu008, 2014.







\bibitem{Hap88} {\sc D. Happel,} Triangulated Categories in the
Representation Theory of Finite Dimensional Algebras, London Math.
Soc., Lecture Notes Ser. {\bf 119}, Cambridge Univ. Press,
Cambridge, 1988.

%
%
%

\bibitem{Hap91} {\sc D. Happel,} {\em On Gorenstein algebras}, In:
Progress in Math. {\bf 95}, Birkh\"{a}user Verlag, Basel, 1991,
389--404.

\bibitem{Hel} {\sc A. Heller}, {\em Stable homotopy categories}, Bull. Amer. Math. Soc. {\bf 74} (1968), 28--63.


\bibitem{HS} {\sc C. Holdaway, and S.P. Smith}, {\em  An equivalence of categories for graded modules over monomial algebras and path algebras of quivers,} J. Algebra {\bf 353} (2012) 249--260; Corrigendum, J. Algebra  {\bf 357} (2012) 319--321.



\bibitem{Hoshi} {\sc M. Hoshino,} {\em Algebras of finite self-injective dimension}, Proc. Amer. Math. Soc. {\bf 112} (3) (1991), 619--622.


\bibitem{Kal} {\sc M. Kalck}, {\em Singularity categories of gentle algebras}, Bull. London Math. Soc., to appear, arXiv:1207.6941v3.



\bibitem{KV} {\sc B. Keller and D. Vossieck}, {\em Sous les cat\'{e}gories d\'{e}riv\'{e}es,}
C.R. Acad. Sci. Paris, t. {\bf 305} S\'{e}rie I  (1987), 225--228.





\bibitem{Or04} {\sc D. Orlov}, {\em Triangulated categories of singularities and D-branes in Landau-Ginzburg
models}, Trudy Steklov Math. Institute {\bf 204} (2004), 240--262.


\bibitem{PSS} {\sc C.  Psaroudakis, O.  Skartsaterhagen,  and O. Solberg,} {\em  Gorenstein categories, singular equivalences and finite generation
of cohomology rings in recollements}, Trans. Amer. Math. Soc. Ser. B {\bf 1} (2014), 45--95.



\bibitem{Qui73} {\sc D. Quillen,} {\em Higher algebraical K-theory
I}, Springer Lecture Notes in Math. {\bf 341}, 1973, 85--147.


\bibitem{Ric} {\sc J. Rickard}, {\em Derived categories and stable
equivalence,} J. Pure Appl. Algebra {\bf 61} (1989), 303--317.


\bibitem{Shen} {\sc D. Shen}, {\em The singularity category of a Nakayama algebra}, J. Algebra, to appear, arXiv:1404.2451.



\bibitem{Sm} {\sc S.P. Smith}, {\em Equivalence of categories involving graded modules over path algebras of quivers},
 Adv. Math. {\bf 230} (2012), 1780--1810.

\bibitem{Tier} {\sc  M. Tierney}, Categorical Constructions in Stable Homotopy Theory, Lecture Notes in Math. {\bf 87}, Springer-Verlag, Berlin Heidelberg New York, 1969.

\bibitem{Zim} {\sc B. Zimmermann Huisgen}, {\em Predicting syzygies over monomial relation algebras}, Manu. Math. {\bf 70} (1991), 157--182.


\bibitem{ZZ} {\sc A. Zimmermann, and G. Zhou}, {\em On singular equivalences of Morita type}, J. Algebra {\bf 385} (1) (2013), 64--79.
\end{thebibliography}

\vskip 10pt

 {\footnotesize \noindent Xiao-Wu Chen\\
 Key Laboratory of Wu Wen-Tsun Mathematics, Chinese Academy of Sciences\\
  School of Mathematical Sciences, University of Science and Technology of China\\
   No. 96 Jinzhai Road, Hefei, Anhui Province, 230026, P. R. China.\\
URL: http://home.ustc.edu.cn/$^\sim$xwchen}

\end{document}